\newtheorem{thm}{Theorem}[section]
\newtheorem{cor}[thm]{Corollary}
\newtheorem{lem}[thm]{Lemma}
\newtheorem{prop}[thm]{Proposition}
\theoremstyle{definition}
\newtheorem{defn}[thm]{Definition}
\newtheorem{rem}[thm]{Remark}
\newtheorem{examp}[thm]{Example}
\numberwithin{equation}{section}
\newcommand{\sD}{{\mathcal D}}
\newcommand{\sE}{{\mathcal E}}
\newcommand{\sF}{{\mathcal F}}
\newcommand{\sG}{{\mathcal G}}
\newcommand{\sK}{{\mathcal K}}
\newcommand{\sL}{{\mathcal L}}
\newcommand{\sO}{{\mathcal O}}
\newcommand{\sX}{{\mathcal X}}
\newcommand{\sZ}{{\mathcal Z}}
\newcommand{\A}{{\mathbb A}}
\newcommand{\N}{{\mathbb N}}
\newcommand{\Q}{{\mathbb Q}}
\newcommand{\R}{{\mathbb R}}
\newcommand{\Z}{{\mathbb Z}}
\newcommand{\Spec}{\mathrm{Spec}}
\newcommand{\rk}{\mathrm{rk}}
\newcommand{\pardeg}{\mathrm{pardeg}}
\newcommand{\Gr}{\mathrm{Gr}}
\newcommand{\chara}{\mathrm{char}}
\author[Mao Sheng]{Mao Sheng}
\author[Jianping Wang]{Jianping Wang}
\email{msheng@ustc.edu.cn}
\email{jianpw@ustc.edu.cn}
\address{School of Mathematical Sciences, University of Science and Technology of China, Hefei, 230026, China}
\begin{document}
	
	\title[Tensor product theorem]{Tensor product theorem for parabolic $\lambda$-connections}
	
	\begin{abstract}
		We establish a tensor product theorem for slope semistable parabolic $\lambda$-connections over smooth projective varieties in arbitrary characteristic.
	\end{abstract}

	\maketitle
	\section{Introduction}
	A basic result in the theory of vector bundles is the so-called \emph{tensor product theorem}: the tensor product of two slope semistable vector bundles over a smooth projective variety in char zero is again slope semistable. There are several different approaches to establish the theorem. The aim of the paper is to apply the recent theory of Higgs-de Rham flows \cite{LSZ19} to this type problem. Our result is the following generalization of the tensor product theorem to parabolic vector bundles equipped with (not necessarily integrable) $\lambda$-connections.    
	\begin{thm}\label{main thm}
		Let $k$ be an algebraically closed field. Let $X$ be a smooth projective variety over $k$ and $L$ an ample line bundle over $X$. Let $D$ be a reduced effective normal crossing divisor in $X$. Let $\lambda\in k$. Let $(E^i_{\bullet},\nabla^i), i=1,2$ be two $\mu_L$-semistable parabolic $\lambda$-connections over $(X,D)$. If either $\textrm{char}\ k=0$ \emph{or} $\textrm{char}\ k=p>0$ and $\rk(E^1)+\rk(E^2)\leq p+1$ , then their tensor product is $\mu_L$-semistable.
	\end{thm}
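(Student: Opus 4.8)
We outline the plan; below, $r_i=\rk(E^i)$.

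\emph{Reductions.} Since $\mu_L$-semistability is preserved under small perturbations of the parabolic weights and the parabolic tensor product varies continuously in them, we may first assume all weights are rational. The covering trick — relating parabolic bundles on $(X,D)$ to $G$-equivariant bundles on a finite flat cover $\pi\colon Y\to X$ with $Y$ smooth projective and $G$ finite — then reduces the problem to ordinary $\lambda$-connections on $Y$: the correspondence multiplies parabolic slopes by $\deg\pi$ and respects tensor products, and since the maximal destabilizing sub-$\lambda$-connection is canonical, hence $G$-stable, $G$-equivariant semistability with respect to $\pi^*L$ agrees with ordinary semistability. (One could alternatively work throughout with the parabolic Higgs--de Rham flow of \cite{LSZ19} and skip this step.) If $\lambda\neq0$ we replace $(E,\nabla)$ by $(E,\lambda^{-1}\nabla)$, reducing to $\lambda\in\{0,1\}$; tensoring each $E^i$ with a suitable $\Q$-line bundle — which changes neither the semistability of $E^i$ nor that of $E^1\otimes E^2$ — lets us assume $\mu_L(E^1)=\mu_L(E^2)=0$. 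By the Mehta--Ramanathan restriction theorem (in the form valid for $\lambda$-connections, classically in characteristic $0$ and via Langer's techniques in characteristic $p$), each $E^i$ restricts to a $\mu_L$-semistable $\lambda$-connection on a sufficiently general smooth complete-intersection curve $C\subset Y$, and a $\lambda$-connection on $Y$ is $\mu_L$-semistable once its restriction to such a curve is; since restriction commutes with tensor products, we may assume $Y=C$ is a curve, so that slope $0$ forces all Chern classes to vanish. Finally, the full subcategory of $\mu_L$-semistable $\lambda$-connections of slope $0$ on $C$ is abelian and closed under extensions, and a short exact sequence $0\to E'\to E^1\to E''\to0$ of sub-$\lambda$-connections with $E',E''$ semistable of slope $0$ induces $0\to E'\otimes E^2\to E^1\otimes E^2\to E''\otimes E^2\to0$; inducting on $r_1+r_2$ we reduce to the case that $E^1,E^2$ are \emph{stable} of slope $0$ (still with $r_1+r_2\le p+1$ in positive characteristic).

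\emph{Characteristic $0$.} Simpson's non-abelian Hodge correspondence ($\lambda=0$) and the Riemann--Hilbert correspondence ($\lambda=1$; a slope-$0$ semistable connection has degree $0$) identify $E^i$ with an irreducible finite-dimensional representation $\rho_i$ of $\pi_1(C)$. Since a tensor product of semisimple representations is semisimple in characteristic $0$, the $\lambda$-connection corresponding to $\rho_1\otimes\rho_2$, namely $E^1\otimes E^2$, is polystable, in particular $\mu_L$-semistable; unwinding the reductions gives the theorem.

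\emph{Characteristic $p$.} Now $r_i\le p$, so the inverse Cartier transform $C^{-1}$ applies to our objects. By the theory of \cite{LSZ19}, $C^{-1}$ together with the passage to the graded sheaf of the Simpson (Griffiths-transverse) filtration preserves $\mu_L$-semistability and slope, is compatible with tensor products and with Frobenius descent, and attaches to each stable $E^i$ a Higgs--de Rham flow and thence an irreducible representation $\rho_i$ of dimension $r_i$ of the étale fundamental group of a finite cover of $C$, slope-$0$ semistable $\lambda$-connections corresponding to semisimple representations. The tensor product $E^1\otimes E^2$ then corresponds to $\rho_1\otimes\rho_2$, and Serre's theorem on tensor products of semisimple representations in characteristic $p$ — whose hypothesis $(\dim\rho_1-1)+(\dim\rho_2-1)<p$ is exactly $r_1+r_2\le p+1$ — shows $\rho_1\otimes\rho_2$ is semisimple; hence $E^1\otimes E^2$ is attached to a semisimple representation and is $\mu_L$-semistable (indeed polystable), and the reductions give the general case.

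\emph{Main obstacle.} I expect the difficulty to concentrate in the characteristic-$p$ dictionary ``slope-$0$ semistable $\lambda$-connection with trivial Chern classes $\leftrightarrow$ semisimple representation'' in the generality needed: one must verify that the Higgs--de Rham flow of a stable object is (pre)periodic so that the correspondence of \cite{LSZ19} applies — possibly only after a finite cover or an auxiliary lift to $W_2(k)$ — and must control the non-integrable case, the interplay between $\lambda=0$ and $\lambda=1$, and any nilpotence or liftability hypotheses built into the flow machinery. A second, related point is that the flow must be run on $E^1$ and $E^2$ \emph{separately}, the tensor product being taken only at the level of representations: since $\rk(E^1\otimes E^2)=r_1r_2$ may well exceed $p$, the semistability-preservation property of $C^{-1}$ cannot be invoked for $E^1\otimes E^2$ itself, so it is the representation-theoretic input (Serre's bound, matching the rank hypothesis of the theorem exactly) — not any flow run on the tensor product — that forces semistability. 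The remaining task is routine bookkeeping: checking that each reduction in the first step (weights, covering trick, $\Q$-twist, restriction to a curve, passage to stable objects) is compatible both with the $\lambda$-connection structure and with the formation of tensor products.
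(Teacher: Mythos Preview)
Your reductions (rational weights, covering trick, restriction to curves, Jordan--H\"older to reduce to the stable case) are in the same spirit as the paper's, though the paper orders them differently and, notably, handles characteristic $0$ by spreading out and reducing modulo $p$ (its Lemma~\ref{mod p red}) rather than by invoking Simpson or Riemann--Hilbert. Your transcendental characteristic-$0$ argument is essentially the one the paper is trying to \emph{algebraize}.

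The genuine gap is in characteristic $p$. You want to pass from a stable degree-$0$ Higgs bundle (or $\lambda$-connection) on a curve to an irreducible representation of the \'etale fundamental group via \cite{LSZ19}, and then invoke Serre's theorem on tensor products of semisimple representations. But that dictionary requires the Higgs--de Rham flow to be \emph{(pre)periodic}, and this is not known for an arbitrary stable object over an arbitrary algebraically closed field of characteristic $p$; it is established only in special situations (e.g.\ over $\overline{\F}_p$ by a pigeonhole argument, or for objects coming from geometry). You flag this yourself as the ``main obstacle'', and it is a real one: without periodicity there is no representation $\rho_i$ to speak of, so Serre's bound never enters.

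The paper sidesteps this entirely. It does run the flow on the tensor product, but not by appealing to semistability-preservation of $C^{-1}$ on $E^1\otimes E^2$ (which, as you note, could have rank exceeding $p$). Instead it uses that $C^{-1}$ \emph{commutes with tensor products} when the nilpotence indices satisfy $l^1+l^2\le p-1$ (guaranteed by $r_1+r_2\le p+1$), so the tensor of the two individual semistable flows is itself a Higgs--de Rham flow for $E^1\otimes E^2$. The key new idea is then a pure boundedness argument (Step~4): after arranging $\deg E^i=0$, the Higgs terms $\{E^1_j\otimes E^2_j\}_j$ form a bounded family, so the degrees of their subsheaves are uniformly bounded above; but a destabilizing $\theta$-invariant subsheaf of $E^1_0\otimes E^2_0$ would have positive degree and, pushed along the flow, would produce subsheaves of degree growing like $p^j$, a contradiction. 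No periodicity and no representation theory are needed. It is striking that the rank condition $r_1+r_2\le p+1$ matches Serre's hypothesis exactly, but in the paper it arises for a different reason---to make $C^{-1}$ compatible with tensor products---not from semisimplicity of $\rho_1\otimes\rho_2$.
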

	Note that torsion freeness has been built into our definitions of parabolic sheaves and their tensor products (see \S2).  
	
	In char zero, for Higgs bundles over $X$, the above result is due to C. Simpson (\cite[Corollary 3.8] {S92}). He first reduced the theorem to the curve case and then used the existence of Higgs-Yang-Mills connections for stable subquotients in a Jordan-H\"older filtration attached to a semistable Higgs bundle. However, in this approach, one might not be able to weaken the regularity condition on $X$ (compare \cite[Theorem 3.1.4]{HL10}, \cite[Theorem 5.16]{M16}), since the restriction theorem of Mehta-Ramanathan \cite{MR82} requires $X$ be smooth. 
	
	Our method is basically to algebraize Simpson's approach. We shall use the reduction modulo $p$. Recall that in char $p$, Balaji-Parameswaran (\cite[Theorem 1.1]{BP11}) proved that the tensor product of two semistable Higgs bundles over a smooth projective curve with trivial determinant and the rank condition \footnote{The rank condition of Balaji-Parameswaran is same as the one given in Theorem \ref{main thm}. It is interesting to note that the condition shows up from a totally different reason. See Step 3-4 in the proof of Theorem \ref{main thm}.} is semistable. In this paper, we shall give it another proof, using the existence result on semistable Higgs-de Rham flows in char $p$ (\cite[Theorem A.1]{LSZ19}, \cite[Theorem 5.12]{LA13}). Our approach has some new features which are absent in the GIT approach: (i) the presence of $D$, that is, considering logarithmic Higgs fields does not introduce additional difficulty; (ii) the same method treats semistable connections on an equal footing. We remark that, even in the curve case, our result for connections cannot be directly deduced from the theorem of Balaji-Parameswaran. On the other hand, a Metha-Ramanathan type result for parabolic $\lambda$-connections should be supplied. In this aspect, one has the result of Simpson (\cite[Lemma 3.7]{S92}) for Higgs sheaves and the result of Mochizuki (\cite[Proposition 3.29]{MO06}) for parabolic Higgs sheaves. We observe that the integrality of Higgs fields plays no role in the proofs, and therefore the restriction theorem holds also for parabolic 0-connections. However, the argument does not work for connections \footnote{Indeed, for a smooth hypersurface $Y\subset X$, a parabolic $\lambda$-connection over $(X,D)$ induces a $\Omega_X(\log D+Y)|_Y$-valued operator by restriction. It is only an $\Omega_X(\log D)|_Y$-valued operator when $\lambda=0$.}. We circumvent this difficulty by exploiting again the idea of Simpson on the existence of a gr-semistable filtration on a semistable connection. 
	
	In many ways, the notion of a Higgs-de Rham flow is a good analogue of Higgs-Yang-Mills flow in complex algebraic geometry. We would like to remind our readers of the recent work of Arapura \cite{A19}, the work of Esnault-Groechenig \cite{EG}, the work of Langer \cite{LA15}-\cite{LA16} and the work of Lan-Sheng-Yang-Zuo \cite{LSYZ19}.

	\section{Preliminaries on parabolic $\lambda$-connections}\label{preliminaries}
	Fix a locally noetherian scheme $S$. Let $X$ be a smooth $S$-scheme, and $D=\sum\limits_{i=1}^{w}D_i\subset X$ be an effective relative reduced normal crossing $S$-divisor. For brevity, we write the pair $(X,D)$ by $X_{\log}$. Following \cite{MY92}, we make the following
	\begin{defn}
		Let $E$ be a torsion free coherent $\mathcal{O}_X$-module which is flat over $S$. A parabolic structure on $E$ with parabolic support in $D$ is an $\mathbb{R}$-indexed filtration of coherent sheaves $E_{\bullet}$ over $X$ such that
		\begin{enumerate}
			\item[(0)] $E_t$ is flat over $S$, for $t\in \R$,
			
			\item[(1)] $E_0=E$,
			
			\item[(2)] $E_{t} \subseteq E_{s}$, for $t \geq s$,
			
			\item[(3)] $\exists  \epsilon>0$ such that $E_{t-\epsilon} = E_{t}$ holds for all $t\in \R$.
			
			\item[(4)] $E_{t+1}=E_t(-D)$, for $t\in \R$.
		\end{enumerate}
		We call $E$ together with a parabolic structure $E_{\bullet}$ a \emph{parabolic sheaf} over $X_{\log}/S$.   	
	\end{defn}
	For an $1\leq i\leq w$, by (3) there are only finitely many numbers $\{a_{ij}\}\subset [0,1)$ such that $\Gr_{a_{ij}}(E_{\bullet}):=(\frac{E_{a_{ij}}}{E_{a_{ij}+\delta}})|_{D_i}$ is nonzero for small enough $\delta>0$. The collection $\{a_{ij}\}_{j}$ is called the set of \emph{parabolic weights} of $E_{\bullet}$ along $D_i$. A parabolic weight is said to be \emph{rational} if it is a ratoinal number. In this paper, the \emph{parabolic filtration} associated to $E_{\bullet}$ means the finite filtration 
	$$
	E=E_0\supsetneq E_1\supsetneq \cdots\supsetneq E_l\supsetneq E_{l+1}=E(-D),
	$$
	by forgetting the parabolic weights of $E_{\bullet}$. One defines a morphism of parabolic sheaves, a parabolic coherent subsheaf (quotient sheaf), and the direct sum of parabolic sheaves in the natural way. However they do not form an abelian category, because of the torsion free assumption.
	\begin{examp}\label{example}
		\begin{itemize}
			\item [(a)] Trivial parabolic structure. The trivial parabolic structure on $E$ is defined by $E_t:=E(-\left\lceil t \right\rceil D)$, where $\left\lceil \centerdot \right\rceil$ is the ceiling function. The parabolic weights are all zero. 
			\item [(b)]	Parabolic vector bundle. For an invertible sheaf $L$ over $X$ and a $w$-tuple $\vec{a}=(a_i)\in [0,1)^{w}$, one defines a parabolic structure on $L$ by
			$$L^{\vec{a}}_t:=L\Big(\sum_{1\leq i\leq w}-\left\lceil t-a_i \right\rceil D_i\Big).$$ The parabolic weight of $L^{\vec{a}}_{\bullet}$ along $D_i$ is $a_i$. A parabolic vector bundle over $X_{\log}$ is a vector bundle $V$ over $X$, equipped with a parabolic structure which Zariski locally is a direct sum of $L_{\bullet}^{\vec{a}}$s. This is called a \emph{locally abelian} parabolic bundle in \cite{IS}. 
		\end{itemize}
	\end{examp}
	\begin{lem}\label{parabolic vb over curve}
		For $S=\Spec\ k$ with $k$ algebraically closed and $\dim X=1$, any parabolic sheaf over $X_{\log}/k$ is a parabolic vector bundle. 	
	\end{lem}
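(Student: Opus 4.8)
The plan is to show that a torsion free coherent sheaf on a smooth projective curve is automatically locally free, and then that any $\R$-indexed filtration satisfying conditions (0)--(4) is Zariski-locally a direct sum of the elementary rank-one parabolic bundles $L^{\vec a}_\bullet$ of Example \ref{example}(b). The first point is classical: over a regular one-dimensional scheme, torsion free is equivalent to locally free, since the local rings are discrete valuation rings and finitely generated torsion free modules over a DVR are free. Thus the underlying sheaf $E = E_0$ is a vector bundle; moreover each $E_t$, being a coherent subsheaf of the locally free sheaf $E$ on a smooth curve, is again locally free of the same rank (a subsheaf of full rank, since $E_{t+1} = E_t(-D)$ forces $E_t$ to be nonzero of rank $\rk E$ in every component).

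Next I would localize. Fix a closed point $x \in X$; if $x \notin \mathrm{Supp}(D)$ then near $x$ all the $E_t$ coincide with $E$ up to the twist by powers of the ideal of $D$, which is trivial there, so the parabolic structure is locally $L^{\vec 0}_\bullet$-type and there is nothing to prove. So assume $x = D_i$ for some (unique, as $\dim X = 1$) $i$, and work in the DVR $\sO_{X,x}$ with uniformizer $\pi$ cutting out $D_i$. The filtration $E_\bullet$ restricted to $\Spec \sO_{X,x}$ is then a decreasing, left-continuous, $\R$-indexed filtration of the free module $M := E_x$ by finite-colength submodules, with $M_{t+1} = \pi M_t$ and only finitely many jumps $\{a_{ij}\} \subset [0,1)$ in each unit interval. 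The key step is to diagonalize this flag: I claim one can choose an $\sO_{X,x}$-basis $e_1,\dots,e_r$ of $M$ adapted to the filtration, meaning each $M_t$ is spanned by $\{\pi^{\lceil t - a_{ij(\ell)}\rceil} e_\ell\}$ for suitable weights attached to the basis vectors. This follows by downward induction on the colength, or equivalently by a standard argument on filtered modules over a DVR: the successive quotients $\Gr_{a}(E_\bullet)$ are vector spaces over the residue field, pick bases lifting through, and use that any finite filtration of a free module over a DVR by saturated (equivalently, free-quotient) submodules splits compatibly — essentially the theory of elementary divisors applied to a chain. Having produced such a basis, $M_\bullet \cong \bigoplus_\ell (L^{(a_\ell)}_\bullet)_x$, and since the weights and the splitting can be chosen over a Zariski-open neighborhood (finitely many conditions, all open), we get the desired local description.

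The main obstacle is the compatible splitting step: it is not enough that each individual $E_t$ is a free submodule of $M$; one needs a single basis simultaneously adapted to the whole chain $E_{a_{i1}} \supset E_{a_{i2}} \supset \cdots$ and to its periodicity under $\pi$. The cleanest way to handle this is to pass to the associated graded and observe that over a DVR a finite flag of a free module with torsion-free successive quotients is isomorphic, as a filtered module, to the direct sum of its associated graded pieces; this can be proved by choosing splittings one step at a time from the top, using projectivity of free modules over the local ring, and it is here that one must be slightly careful to respect condition (4) so that the chosen basis is genuinely periodic and hence descends to an actual parabolic sheaf rather than just a filtration on an open set. Everything else — torsion free implies locally free on a curve, openness of the locus where a given splitting works — is routine.
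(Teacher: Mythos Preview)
Your overall strategy matches the paper's, but you overcomplicate the splitting step and invoke a lemma that does not apply. The successive quotients $E_{a_i}/E_{a_{i+1}}$ in the parabolic filtration are \emph{torsion} modules over the DVR (finite length, supported at $x$), not torsion-free; so your appeal to ``a finite flag of a free module over a DVR with torsion-free successive quotients splits compatibly,'' and the projectivity argument you sketch for it, are beside the point here. What is actually relevant---and what you also correctly gesture at when you say ``the successive quotients are vector spaces over the residue field, pick bases lifting through''---is that the entire chain $E_0 \supset E_{a_1} \supset \cdots \supset E_{a_l} \supset \pi E_0$ lies between $E_0$ and $\pi E_0$, and is therefore determined by a flag of $k$-subspaces in the fiber $E(x) = E_0/\pi E_0$.

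The paper's proof goes straight to this observation: write down the induced flag $E(x) = F_0 \supsetneq F_1 \supsetneq \cdots \supsetneq F_l \supsetneq 0$ in the fiber, choose a basis of $E(x)$ splitting it, and lift to a local basis of $E$ by Nakayama. Periodicity (condition (4)) is then automatic, since each $E_t$ for $0 \le t < 1$ is the preimage in $E_0$ of the appropriate $F_i$, and $E_{t+n} = E_t(-nD)$ handles all other $t$. Your worry about making the chosen basis ``genuinely periodic'' and compatible with condition (4) dissolves once you argue in the fiber rather than trying to split the DVR-module filtration step by step.
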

	
	\begin{proof}
		Let $E_\bullet$ be a parabolic sheaf over $X_{\log}$. As $E$ is torsion free and $X$ is a smooth curve over a field, $E$ is locally free, viz. a vector bundle over $X$. Let $\{0=a_0<a_1<\cdots<a_l<1\}$ be the parabolic weights of $E_{\bullet}$ at a point $x\in D$. Then the parabolic structure $E_{\bullet}$ restricts to a flag of vector spaces over $k=k(x)$:
		$$
		E(x)=F_0(E)\supsetneq F_1(E_x) \supsetneq \dots \supsetneq F_l(E_x)\supsetneq F_{l+1}(E_x)=0.
		$$
		Set $n_i=\dim_k(F_i(E_x)/F_{i-1}(E_x))$. Choose a basis of $E(x)$ to split the above filtration. Then, by Nakayama lemma, it follows that locally around $x$, there is an isomorphism of parabolic sheaves:
		$$
		E_{\bullet}\cong \bigoplus\limits_{0\leq i\leq l}((\mathcal{O}_X)_{\bullet}^{a_i})^{\oplus n_i},
		$$
		where $(\mathcal{O}_X)_{\bullet}^{a_i}$ is the parabolic line bundle as defined in \ref{example}(b). 
	\end{proof}
	
	To define the tensor product of two parabolic sheaves, we set $j: U=X-D\to X$ to be the open immersion. For any coherent sheaf $E$ over $X$, put $E_{\infty}=\lim\limits_{\longrightarrow}E(nD)\subset j_*j^*E$. For a parabolic sheaf $E_{\bullet}$ over $X_{\log}$, there is a natural inclusion $E_t\to E_{\infty}$ for any $t$. Then for two parabolic sheaves $E_{\bullet}$ and $F_{\bullet}$ over $X_{\log}$, define for $t\in \R$
	$$
	(E_{\bullet}\otimes F_{\bullet})_t=\textrm{Im}(\bigoplus_{t_1+t_2=t}E_{t_1}\otimes F_{t_2}\longrightarrow E_{\infty}\otimes F_{\infty}/\textrm{torsion}).
	$$
	It is not difficult to verify that $(E_{\bullet}\otimes F_{\bullet})_{\bullet}$ defines a parabolic structure on $(E_{\bullet}\otimes F_{\bullet})_0$.
	
	Now we fix an element $\lambda\in \Gamma(S,\sO_S)$, and for a parabolic sheaf $E_{\bullet}$ over $X_{\log}/S$, we make the following
	\begin{defn}
		An $S$-relative $\lambda$-connection $\nabla$ on $E_{\bullet}$ is a collection $\{\nabla_t\}_{t\in \R}$ of $\sO_S$-linear maps with $\nabla_t: E_t\to E_t\otimes \Omega_{X/S}(\log D)$ satisfying
		\begin{itemize}
			\item [(a)] $\nabla(fs)=\lambda df\otimes s+f\nabla(s)$ for $f\in \sO_X$ and $s\in E_t$;
			\item [(b)] For any $t\geq s$, the following diagram commutes:
			\begin{equation*}
				\xymatrix{
					E_t\ar[r]^-{\nabla_t} \ar[d]_{\iota_{t,s}} & E_t\otimes \Omega_{X/S}(\log D) \ar[d]^{ \iota_{t,s}\otimes id}\\
					E_s\ar[r]^-{\nabla_s} & E_s\otimes \Omega_{X/S}(\log D)\\
				}
			\end{equation*}
			where $\iota_{t,s}: E_t\to E_s$ is the natural inclusion.
		\end{itemize}
	\end{defn}
	\begin{rem}\label{remark on lambda}
		A $\lambda$-connection is said to be integrable if $\nabla\wedge \nabla=0$. The integrability condition is automatic if $X/S$ is of relative dimension one.  For a $\lambda$-connection $\nabla$ on $E_{\bullet}$, there is a natural $\lambda$-connection on $E_{\infty}$. Therefore, for two parabolic $\lambda$-connections $(E_{\bullet},\nabla_E)$ and $(F_{\bullet},\nabla_F)$, their tensor product is defined to be the $(E_{\bullet}\otimes F_{\bullet})_{\bullet}$, equpped with the $\lambda$-connection induced from the tensor product of $\lambda$-connections over $E_{\infty}\otimes F_{\infty}$.
	\end{rem}
In order to handle parabolic structures in this paper, we need to recall the generalized Biswas-Iyer-Simpson (BIS) correspondence for parabolic $\lambda$-connections in all characteristics (\cite{Bi},\cite{IS},\cite[\S2]{KS20}). Let $k$ be an algebraically closed field and let $N$ be a positive integer such that $\chara\ k\nmid N$. Let $X$ be a smooth curve over $k$ and let $\pi: X'\to X$ be a cyclic cover of order $N$ with branch divisor $D\subset X$. The constant group scheme $G\cong \Z/N\Z$ acts on $X'$ with quotient $X'/G\cong X$. Let $D'=\sum_{i=1}^lD_i'$ be the reduced divisor $(\pi^*D)_{red}$. Set $X'_{\log}$ to be the log pair $(X',D')$. 
\begin{prop}\cite[Proposition 2.14]{KS20}\label{BIS}
Notations as above. The parabolic pushfoward $\pi_{\mathrm{par}*}$ and parabolic pullback $\pi_{\mathrm{par}}^*$ induce an equivalence of categories between the category of $G$-equivariant $\lambda$-connections over $X'_{\log}$ and the category of parabolic $\lambda$-connections over $X_{\log}$ whose parabolic structure is supported along $D$ and has weights in $\frac{1}{N}\Z$.
\end{prop}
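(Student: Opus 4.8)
The plan is to reduce the statement to the classical Biswas-Iyer-Simpson correspondence for (quasi-)coherent sheaves, and then to observe that the datum of a $\lambda$-connection is transported across the equivalence essentially for free. Write $G=\langle\sigma\rangle\cong\Z/N\Z$, and fix a primitive $N$-th root of unity $\zeta\in k$, which exists since $\chara\ k\nmid N$. The geometric fact I would isolate first is the canonical isomorphism $\pi^*\Omega_{X/k}(\log D)\xrightarrow{\sim}\Omega_{X'/k}(\log D')$: away from $D$ it is the \'etaleness of $\pi$, while along $D$, in \'etale-local coordinates $x=t^N$, one has $\pi^*(\tfrac{dx}{x})=N\tfrac{dt}{t}$ and $N\in k^\times$, so the two invertible log sheaves coincide. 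This makes the property ``being a $\lambda$-connection with log poles'' stable under $\pi^*$ and $\pi_*$: for a $G$-equivariant $(W,\nabla_W)$ over $X'_{\log}$, the projection formula identifies $\pi_*\bigl(W\otimes\Omega_{X'}(\log D')\bigr)$ with $\pi_*W\otimes\Omega_X(\log D)$, and restricting the Leibniz rule of $\nabla_W$ to $\sO_X\subset\pi_*\sO_{X'}$ exhibits $\pi_*\nabla_W$ as a $G$-equivariant $\lambda$-connection on $\pi_*W$ over $X_{\log}$; and conversely the same isomorphism shows $\pi^*$ of any $\lambda$-connection with log poles along $D$ is one with log poles along $D'$.

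Next I would invoke the correspondence at the level of sheaves. Since $|G|$ is invertible in $k$, taking $G$-invariants is exact; following \cite{Bi},\cite{IS} one sets, for a $G$-equivariant sheaf $W$ on $X'$, $(\pi_{\mathrm{par}*}W)_{j/N}:=\bigl(\pi_*(W\otimes\sO_{X'}(-jD'))\bigr)^G$ for $j\in\Z$ --- these are coherent sheaves on $X$ of the same rank as $W$, forming a decreasing filtration with $(\pi_{\mathrm{par}*}W)_{t+1}=(\pi_{\mathrm{par}*}W)_t(-D)$ --- and extends it to all $t\in\R$ by right-continuity; conversely $\pi_{\mathrm{par}}^*E_\bullet$ is reconstructed from the $\pi^*E_{j/N}$. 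One then checks that $(\pi_{\mathrm{par}*}W)_\bullet$ satisfies axioms (0)--(4) of a parabolic sheaf, that its parabolic weights along $D$ lie in $\tfrac1N\Z$, and that $\pi_{\mathrm{par}*}$ and $\pi_{\mathrm{par}}^*$ are mutually quasi-inverse on underlying sheaves; this is precisely the generalized BIS dictionary recalled in \cite[\S2]{KS20}. The point that requires care is the index bookkeeping relating the twist $\sO_{X'}(-jD')$, the $G$-action along $D'$, and the parabolic weight $\tfrac jN$.

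Finally I would promote this dictionary to $\lambda$-connections. Since $\pi_*\nabla_W$ is $G$-equivariant it commutes with the $G$-action used to form the invariants, and a log $\lambda$-connection on a coherent sheaf $F$ restricts to a log $\lambda$-connection on $F(-kD)$ for every $k$ --- writing a local section of $F(-kD)$ as $x^ks_0$ with $x$ a local equation of $D$, one has $\nabla_F(x^ks_0)=\lambda kx^k\tfrac{dx}{x}\otimes s_0+x^k\nabla_F s_0\in F(-kD)\otimes\Omega_X(\log D)$. Combining these, $\pi_*\nabla_W$ induces a $\lambda$-connection on each step $(\pi_{\mathrm{par}*}W)_t$, compatible with the inclusions $\iota_{t,s}$, hence a $\lambda$-connection on the parabolic sheaf $\pi_{\mathrm{par}*}W$ with log poles precisely along $D$; symmetrically, $\pi_{\mathrm{par}}^*$ sends a parabolic $\lambda$-connection over $X_{\log}$ to a $G$-equivariant one over $X'_{\log}$. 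As all the constructions are natural in the sheaf, the fact that $\pi_{\mathrm{par}*}$ and $\pi_{\mathrm{par}}^*$ are mutually quasi-inverse on underlying sheaves upgrades to a mutual quasi-inverse pair of functors between the two categories of $\lambda$-connections, which is the assertion. I expect the only genuine obstacle to be the residue/weight bookkeeping of the previous paragraph: one must verify that the shift of the Leibniz rule incurred by twisting by $\sO_{X'}(-jD')$ matches the parabolic weight $\tfrac jN$, so that ``log poles along $D'$ upstairs'' corresponds to ``log poles along $D$ downstairs'' under the right normalization of weights; granting this, the $\lambda$-connection case introduces no new idea beyond the sheaf case, uniformly in $\lambda$.
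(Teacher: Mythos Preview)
The paper does not supply its own proof of this proposition: it is quoted verbatim from \cite[Proposition 2.14]{KS20} and used as a black box. So there is nothing to compare your argument against in the present paper.

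That said, your sketch is the standard route and is essentially how the cited reference (and the earlier works \cite{Bi}, \cite{IS}) proceed: first establish the Biswas--Iyer--Simpson dictionary between $G$-equivariant sheaves on $X'$ and parabolic sheaves on $X$ with weights in $\tfrac1N\Z$, using exactness of $(-)^G$ when $|G|\in k^\times$; then transport the $\lambda$-connection across it via the canonical isomorphism $\pi^*\Omega_{X}(\log D)\cong\Omega_{X'}(\log D')$, which is exactly your first paragraph. Your observation that a log $\lambda$-connection on $F$ restricts to one on $F(-kD)$ is the right mechanism for compatibility with the parabolic filtration. The only place I would tighten the write-up is the sentence about $\pi_*\nabla_W$: as stated, $\pi_*\nabla_W$ is a $\lambda$-connection on the $\sO_X$-module $\pi_*W$ (Leibniz for $f\in\sO_X$ follows from Leibniz for $\pi^\sharp f\in\sO_{X'}$), and it is $G$-equivariant, hence descends to each $(\pi_*(W(-jD')))^G$; you say this, but it is worth making explicit that the projection formula identification $\pi_*(W\otimes\Omega_{X'}(\log D'))\cong(\pi_*W)\otimes\Omega_X(\log D)$ is $G$-equivariant, so that taking invariants really lands in $(\pi_{\mathrm{par}*}W)_t\otimes\Omega_X(\log D)$. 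With that in place your outline is complete.
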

When $X/k$ projective, equipped with an ample line bundle $L$, one may define the \emph{parabolic degree} of a parabolic sheaf $E_{\bullet}$ over $X_{\log}/k$ by the following formula:
	$$
	\pardeg_L(E_{\bullet})=\deg_{L}(E_{0})+\sum_{1\leq i\leq w}\sum_ja_{ij}n_{ij}\deg_LD_i,
	$$
	where $n_{ij}:=\rk \Gr_{a_{ij}}(E_{\bullet})$. Clearly, for a parabolic sheaf with the trivial parabolic structure, the parabolic degree and the usual degree coincide.
	\begin{defn}
		Let $k$ be an algebraically closed field. Let $(E_{\bullet},\nabla)$ be a $\lambda$-connection over $X_{\log}$. It is said to be $\mu_L$-(semi)stable, if for any $\nabla$-invariant parabolic coherent subsheaf $F_{\bullet}\subset E_{\bullet}$, the following inequality holds:
		$$
		\rk(E)\pardeg_L(F_{\bullet})(\leq)<\rk(F)\pardeg_L(E_{\bullet}).
		$$
	\end{defn}
	For a nonzero parabolic sheaf $E_{\bullet}$ over $X_{\log}$, one defines its parabolic slope (with respect to $L$) by $$\mu_L(E_{\bullet})=\pardeg_L(E_{\bullet})/\rk(E).$$ 
	\begin{lem}\label{tensorslope}
		Notations as above. Let $F^i_\bullet,\ i=1,2$ be two parabolic sheaves over $X_{\log}$. Then $$\mu_L(F^1_\bullet\otimes F^2_\bullet)=\mu_L(F^1_\bullet)+\mu_L(F^2_\bullet).$$
	\end{lem}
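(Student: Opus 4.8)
The plan is to prove the equivalent additivity statement
\[
\pardeg_L(F^1_\bullet\otimes F^2_\bullet)=\rk(F^2)\,\pardeg_L(F^1_\bullet)+\rk(F^1)\,\pardeg_L(F^2_\bullet),
\]
which gives the asserted slope identity upon dividing by $\rk(F^1)\rk(F^2)$; write $r_s=\rk(F^s)$. I will split $\pardeg_L$ into its two constituents, the ordinary degree of the weight-$0$ term and the weighted sum of the parabolic weights, and handle them separately. On $U=X-D$ the sheaf $(F^1_\bullet\otimes F^2_\bullet)_0$ coincides with $F^1|_U\otimes F^2|_U$; since the $F^s$ are torsion free (hence locally free away from codimension two) the usual tensor-product formula for the ordinary degree gives that $\deg_L\big((F^1_\bullet\otimes F^2_\bullet)_0\big)$ exceeds $r_2\deg_L F^1+r_1\deg_L F^2$ by a correction $\sum_i c_i\,\deg_L D_i$ concentrated along the components $D_i$, where $c_i$ is the length at the generic point $\eta_i$ of the cokernel of $F^1\otimes F^2/\mathrm{tors}\hookrightarrow(F^1_\bullet\otimes F^2_\bullet)_0$. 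Because $\pardeg_L$ and every term in its definition are computed divisor by divisor, the lemma will follow once I establish, for each $i$, the numerical identity
\[
c_i+\sum_j\tilde a_{ij}\tilde n_{ij}=r_2\sum_j a^1_{ij}n^1_{ij}+r_1\sum_j a^2_{ij}n^2_{ij},
\]
where $a^s_{ij},n^s_{ij}$ are the parabolic weights and multiplicities of $F^s_\bullet$ along $D_i$, and $\tilde a_{ij},\tilde n_{ij}$ those of $F^1_\bullet\otimes F^2_\bullet$.

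Next I would localize at $\eta_i$, so that $R_i:=\mathcal{O}_{X,\eta_i}$ is a discrete valuation ring whose uniformizer $\pi$ cuts out $D_i$ (near $\eta_i$ the other components of $D$ disappear, so $F^s_{t+1}=F^s_t(-D)$ becomes $(F^s_{t+1})_{\eta_i}=\pi\,(F^s_t)_{\eta_i}$). Exactly as in the proof of Lemma \ref{parabolic vb over curve} — each $(F^s_t)_{\eta_i}$ is free over $R_i$, one has $\pi\,(F^s_t)_{\eta_i}=(F^s_{t+1})_{\eta_i}\subseteq(F^s_{t+\delta})_{\eta_i}$, and a basis adapted to the induced flag on $F^s\otimes k(\eta_i)$ lifts by Nakayama — one obtains splittings $F^s_{\bullet,\eta_i}\cong\bigoplus_\alpha (R_i)^{a^s_{i,\alpha}}_\bullet$ into parabolic line modules $(R_i)^a_t:=\pi^{\lceil t-a\rceil}R_i$, the index $\alpha$ running over $r_s$ values with the weight $a^s_{ij}$ occurring $n^s_{ij}$ times among the $a^s_{i,\alpha}$. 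Since the formation of $(-)_\bullet\otimes(-)_\bullet$ commutes with finite direct sums, this reduces the computation of $c_i$ and of $\tilde a_{ij},\tilde n_{ij}$ to the tensor product of two parabolic line modules.

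The core local computation is then immediate from the defining formula: for $a,b\in[0,1)$,
\[
\big((R_i)^a_\bullet\otimes(R_i)^b_\bullet\big)_t=\pi^{\,\min_{t_1+t_2=t}(\lceil t_1-a\rceil+\lceil t_2-b\rceil)}R_i=\pi^{\,\lceil t-a-b\rceil}R_i,
\]
the minimum being attained at $t_1\in a+\Z$; hence this tensor product is again a parabolic line module, with parabolic weight the fractional part $\{a+b\}$ and weight-$0$ term $\pi^{-\lfloor a+b\rfloor}R_i$. Therefore $\sum_j\tilde a_{ij}\tilde n_{ij}=\sum_{\alpha,\beta}\{a^1_{i,\alpha}+a^2_{i,\beta}\}$ and $c_i=\sum_{\alpha,\beta}\lfloor a^1_{i,\alpha}+a^2_{i,\beta}\rfloor$, so adding these and using $\lfloor x\rfloor+\{x\}=x$ yields
\[
c_i+\sum_j\tilde a_{ij}\tilde n_{ij}=\sum_{\alpha,\beta}\big(a^1_{i,\alpha}+a^2_{i,\beta}\big)=r_2\sum_\alpha a^1_{i,\alpha}+r_1\sum_\beta a^2_{i,\beta}=r_2\sum_j a^1_{ij}n^1_{ij}+r_1\sum_j a^2_{ij}n^2_{ij},
\]
which is the identity demanded in the first step. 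Summing over $i$ against $\deg_L D_i$ and restoring the ordinary-degree contribution finishes the proof.

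The main obstacle I anticipate is bookkeeping rather than conceptual: one must correctly read off the parabolic weights and multiplicities $\tilde a_{ij},\tilde n_{ij}$ of $F^1_\bullet\otimes F^2_\bullet$ and the divisorial correction $c_i$ — i.e. keep straight the ``fractional part versus integer twist'' split in the local tensor product of parabolic line modules — and verify that the passage to the localizations at the $\eta_i$ genuinely captures every contribution to $\pardeg_L$. Once that local model is pinned down, the identity $\lfloor x\rfloor+\{x\}=x$ does the rest.
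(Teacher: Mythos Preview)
Your proof is correct and takes a genuinely different route from the paper's. The paper first restricts to a complete-intersection curve via Bertini (so that, by Lemma~\ref{parabolic vb over curve}, the $F^i_\bullet$ become parabolic vector bundles), then handles the case of trivial parabolic structures by the ordinary splitting principle, next uses the BIS correspondence (Proposition~\ref{BIS}) to pull back along a cyclic cover and reduce rational weights (with denominators coprime to $p$) to the trivial case, and finally runs a density/continuity argument on the space of parabolic weights to reach arbitrary real weights.

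Your argument is more direct and more uniform: you localize at the generic point of each $D_i$, split into parabolic line modules over the DVR, and compute $(R_i)^a_\bullet\otimes(R_i)^b_\bullet$ explicitly, so that the identity $\lfloor a+b\rfloor+\{a+b\}=a+b$ does all the work. This avoids cyclic covers, the BIS correspondence, and any case distinction on the arithmetic of the weights; it also makes transparent exactly where the ``integer twist'' $c_i$ in $\deg_L\big((F^1_\bullet\otimes F^2_\bullet)_0\big)$ is cancelled by the shift to fractional parts in the parabolic weight term. The paper's approach, by contrast, leverages machinery already needed elsewhere (the BIS correspondence reappears in Lemma~\ref{commutative} and in Step~1 of the main proof), so it is economical in that sense. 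Either way the content is the same local line-bundle calculation; you have simply isolated it at the codimension-one points rather than geometrizing it through a cover.
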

	\begin{proof}
		We may assume $L$ to be very ample. By Bertini's theorem (\cite[Theorem 8.18]{H}), we may restrict $F^i_{\bullet}$ to a smooth complete intersection curve	in the linear system $|L|$, and $\pardeg(F^i_{\bullet})$ equals the parabolic degree of the restricted parabolic sheaf. Therefore, we may assume $\dim X=1$. By Lemma \ref{parabolic vb over curve}, we may assume then $F^i_{\bullet}$s are parabolic vector bundles.
		
		Consider first the case that the parabolic structures on $F^{i}_{\bullet}$s are trivial. The truth of this case is a standard fact: one may use the splitting principle to reduce the problem to the case of line bundles that is obviously true. Consider further the case that one of the parabolic structures is nontrivial but satisfies either i) $\chara\ k=0$, the parabolic weights are rational, or ii) $\chara\ k=p>0$, the parabolic weights are rational whose denominators are coprime to $p$. One applies Proposition \ref{BIS} to reduce this case to the previous one. For the general case, we use a density argument as follows: let 
		$$
		V^i=\{\vec{a^i}=(a^i_{jk})|a^i_{jk}\in \R, 0\leq a^i_{jk}<1\}, i=1,2
		$$ 
		be the weight space containing the parabolic weights (say $\vec{a_0^i}$) of $F^i_\bullet$. For $\vec{a^i}\in V^i$, we denote by $F^{i,\vec{a^i}}_\bullet$ the parabolic bundle whose associated parabolic filtration is the same as that of $F^i_{\bullet}$ but with parabolic weights $\vec{a^i}$. In this terminology, $F^{i}_\bullet=F^{i,\vec{a_0^i}}_\bullet$. Define the function
		$$
		\Psi: V^1\times V^2 \longrightarrow \R,\quad (\vec{a^1},\vec{a^2})\longmapsto \mu(F^{1,\vec{a^1}}_\bullet\otimes F^{2,\vec{a^2}}_\bullet)-\mu(F^{1,\vec{a^1}}_\bullet)-\mu(F^{2,\vec{a^2}}_\bullet).
		$$
		By definition of parabolic slope, it is continuous. Moreover, we notice that the subset $Z\subset V^1\times V^2$, consisting of the parabolic weights in the second case, is actually dense. Since by the second case $\Psi$ restricts to $Z$ as the zero map, it is identically zero. Thus the general case follows.

	\end{proof}

	\section{Restriction theorem}
	Mehta and Ramanathan \cite[Theorem 6.1]{MR82} proved that the restriction of a semistable torsion free coherent sheaf to a general hyperplane section of sufficiently high degree is again semistable. Simpson \cite[Lemma 3.7]{S92} generalized it to Higgs sheaves, and later Mochizuiki \cite[Proposition 3.29]{MO06} generalized it further to parabolic Higgs sheaves. Langer \cite[Theorem 10]{LA15} has obtained a much stronger restriction theorem, in that the degree of a general hypersurface can be made explicit. To our purpose, it suffices to generalize the Mehta-Ramanathan restriction theorem to the semistable parabolic $\lambda$-connections. 	
	
	We resume the setting of Theorem \ref{main thm}. Let $(E_\bullet,\nabla)$ be a $\mu_L$-semistable parabolic $\lambda$-connection over $X_{\mathrm{log}}$. By Bertini's theorem, for a general member $Y\in |\sO_X(m)|$ for $m$ large enough, $E_\bullet$ restricts to a parabolic sheaf $E_{\bullet}|_Y$ over $Y_{\log}=(Y,Y\cap D)$, and $Y+D$ is normal crossing so that $D\cap Y$ is a reduced effective normal crossing divisor on $Y$. By a simple local calculation, one sees that the parabolic subsheaf $E_{\bullet}\otimes \sO_{X}(-Y)\subset E_{\bullet}$ is annihilated by the composite map
	$$
	E_{\bullet}\stackrel{\nabla}{\longrightarrow} E_{\bullet}\otimes \Omega_{X/k}(\log D)\stackrel{\textrm{res}}{\longrightarrow}  E_{\bullet}|_{Y}\otimes\Omega_{X/k}(\log D)|_Y \stackrel{
		\textrm{id}\otimes \textrm{ev}}{\longrightarrow} E_{\bullet}|_Y\otimes \Omega_{Y/k}(\log D\cap Y). 
	$$ 
	So $\nabla$ on $E_{\bullet}$ yields the operator $\nabla|_{Y}$ on $E_{\bullet}|_Y$, which is easily seen to be a parabolic $\lambda$-connection over $(Y,D\cap Y)/k$. The aim of the section is the following theorem.
	\begin{thm}\label{restriction thm}
		Let $(X_{\mathrm{log}},L,E_\bullet,\nabla)$ be as above. Then for general hyperplane sections $Y$ of sufficiently high degree, $(E_{\bullet},\nabla)|_{Y}:=(E_{\bullet}|_{Y},\nabla|_{Y})$ is a semistable parabolic $\lambda$-connection.
	\end{thm}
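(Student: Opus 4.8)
The plan is to reduce Theorem \ref{restriction thm} to the case $\lambda=0$, which is essentially available. First I would settle $\lambda=0$: the Mehta--Ramanathan restriction theorem for $\mu_L$-semistable torsion free sheaves was extended to Higgs sheaves by Simpson \cite[Lemma 3.7]{S92} and to parabolic Higgs sheaves by Mochizuki \cite[Proposition 3.29]{MO06}, and, as observed in the introduction, integrability $\theta\wedge\theta=0$ is never used in those proofs; so they give Theorem \ref{restriction thm} verbatim for every $\mu_L$-semistable parabolic $0$-connection over $X_{\log}$.

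It remains to treat $\lambda\neq 0$, where the direct argument fails. If $(E_\bullet|_Y,\nabla|_Y)$ were not semistable, one would spread $Y$ out in a family of general hypersurfaces of high degree, form the relative maximal destabilizing parabolic subsheaf, glue it by the Mehta--Ramanathan argument to a parabolic subsheaf $\widetilde F_\bullet\subseteq E_\bullet$ over $X$, and contradict $\mu_L$-semistability of $(E_\bullet,\nabla)$. But the glued sheaf need not be $\nabla$-invariant: as explained in the footnote after Theorem \ref{main thm}, $\nabla|_Y$ is obtained from $\nabla$ by discarding the $\sO_Y$-valued ``residue along $Y$'' component of the restricted operator --- a component absent only when $\lambda=0$ --- so $\nabla|_{Y_t}$-invariance of $\widetilde F_\bullet|_{Y_t}$ does not force $\nabla(\widetilde F_\bullet)\subseteq \widetilde F_\bullet\otimes\Omega_{X/k}(\log D)$.

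To circumvent this I would invoke Simpson's idea of a gr-semistable filtration. The key input is: \emph{every $\mu_L$-semistable parabolic $\lambda$-connection $(E_\bullet,\nabla)$ over $X_{\log}$ carries a finite $\nabla$-transverse filtration by saturated parabolic subsheaves}
$$
0=V^0_\bullet\subseteq V^1_\bullet\subseteq\cdots\subseteq V^r_\bullet=E_\bullet,\qquad \nabla(V^i_\bullet)\subseteq V^{i+1}_\bullet\otimes\Omega_{X/k}(\log D),
$$
\emph{whose associated graded $\Gr_V(E_\bullet):=\bigoplus_i V^{i+1}_\bullet/V^i_\bullet$, equipped with the operator induced by $\nabla$ --- which is $\sO_X$-linear by transversality, hence a (rescaled) parabolic Higgs field --- is a $\mu_L$-semistable parabolic Higgs sheaf over $X_{\log}$.} This existence statement is the step I expect to be the main obstacle; I would prove it by algebraizing Simpson's construction, viewing the filtration through the Rees-module picture in which it is recovered from the degeneration of $(E_\bullet,\nabla)$ to its Higgs limit as $\lambda\to0$, and checking that every step is characteristic-free and can be run along the parabolic filtrations.

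Granting the filtration, one chooses $Y$ general of sufficiently high degree so that the relevant conclusions hold simultaneously for $E_\bullet$, for all the $V^i_\bullet$, and for $\Gr_V(E_\bullet)$: $Y+D$ is normal crossing, restriction to $Y$ is exact on all the graded pieces and compatible with parabolic degrees, and the $\lambda=0$ case applies to the $\mu_L$-semistable parabolic Higgs sheaf $\Gr_V(E_\bullet)$. Then the restricted filtration $\{V^i_\bullet|_Y\}_i$ is $\nabla|_Y$-transverse, and its associated graded with induced Higgs field is $(\Gr_V(E_\bullet))|_Y$, which is a $\mu_L$-semistable parabolic Higgs sheaf on $Y_{\log}$ by the first paragraph. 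Now for any $\nabla|_Y$-invariant parabolic coherent subsheaf $G_\bullet\subseteq E_\bullet|_Y$, put $G^i_\bullet=G_\bullet\cap V^i_\bullet|_Y$; combining $\nabla|_Y$-invariance of $G_\bullet$ with $\nabla|_Y$-transversality of the $V^i_\bullet|_Y$ gives $\nabla|_Y(G^i_\bullet)\subseteq G^{i+1}_\bullet\otimes\Omega_{Y/k}(\log D\cap Y)$, so $\Gr(G_\bullet):=\bigoplus_i (G^{i+1}_\bullet+V^i_\bullet|_Y)/V^i_\bullet|_Y$ is a Higgs-invariant parabolic subsheaf of $(\Gr_V(E_\bullet))|_Y$. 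Since rank and parabolic degree are additive along these filtrations,
$$
\mu_L(G_\bullet)=\mu_L(\Gr(G_\bullet))\le\mu_L\big((\Gr_V(E_\bullet))|_Y\big)=\mu_L(E_\bullet|_Y),
$$
which is exactly $\mu_L$-semistability of $(E_\bullet|_Y,\nabla|_Y)$. Apart from the gr-semistable filtration, the rest is the formal computation just sketched together with routine ``general hypersurface of high degree'' bookkeeping --- transversality to $D$, exactness of restriction on the subquotients, torsion-freeness of the graded pieces, and compatibility of parabolic slopes under restriction.
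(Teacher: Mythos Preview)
Your proposal is correct and follows essentially the same route as the paper: reduce to $\lambda=0$ via Mochizuki's restriction theorem (noting integrability is unused and the argument is characteristic-free), and for $\lambda\neq 0$ pass through a gr-semistable filtration so that the associated graded is a semistable parabolic $0$-connection whose restriction is semistable, which in turn forces semistability of $(E_\bullet,\nabla)|_Y$. The paper supplies the existence of the gr-semistable filtration as a separate Proposition~\ref{grsemistable}, proved not via the Rees-module degeneration picture you suggest but by adapting the iterative $\xi$-construction of \cite[Theorem~A.4]{LSZ19} (repeatedly refining an arbitrary Griffiths-transverse filtration using the maximal destabilizer of its graded) to the parabolic setting; you should expect to carry out something equivalent, since this is indeed the main technical input.
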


	Let us consider an arbitrary parabolic $\lambda$-connection $(E_\bullet,\nabla)$ over $X_{\log}/k$. A level $n$ filtration $Fil=Fil^*_{\bullet}$ of parabolic subsheaves of $E_{\bullet}$
	$$
	E_\bullet=Fil_\bullet^0 \supset Fil_\bullet^1 \supset \dots \supset Fil_\bullet^n\supset 0
	$$ 
	is said to be \emph{Griffiths transverse} if 
	$$
	\nabla(Fil_\bullet^*) \subset Fil_\bullet^{*-1} \otimes \Omega_{X/k}(\log D)
	$$ 
	holds. Taking grading yields a collection $\{Gr_{Fil_{t}}(E_{t},\nabla)\}_{t\in \R}$ of logarithmic 0-connections and morphisms $\{\phi_{t,s}\}_{t\geq s}$ of logarithmic 0-connections
	$$
	\phi_{t,s}: Gr_{Fil_{t}}(E_{t},\nabla)\to Gr_{Fil_{s}}(E_{s},\nabla).
	$$  
	We consider a construction of $Fil_{\bullet}$ such that $\phi_{t,s}$s are all injective. 
	\begin{lem}\label{induced filtration}
		Let $Fil_0$ be a saturated and Griffiths transverse filtration on $(E_0,\nabla)$. For $n\in \Z$, set $Fil_{n}=Fil_0\otimes Fil_{tr}$ to be the tensor filtration over $V_n\cong V_0\otimes \sO_{X}(-nD)$, where $Fil_{tr}$ stands for the trivial filtration. For $n\leq t<n+1$, set $Fil_t=E_t\cap Fil_{n}$ to be the restricted filtration over $E_t$. Then, the filtration $Fil=Fil_{\bullet}$ defined as above is a Griffiths transverse filtration with $\phi_{t,s}$ injective for any $t\geq s$.  
	\end{lem}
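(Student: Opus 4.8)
The plan is to verify three things about $Fil=Fil_\bullet$: that each step $Fil^j=Fil^j_\bullet$ is a parabolic subsheaf of $E_\bullet$, with $E_\bullet=Fil^0_\bullet\supseteq Fil^1_\bullet\supseteq\cdots\supseteq 0$; that $Fil_\bullet$ is Griffiths transverse; and that each $\phi_{t,s}$, $t\ge s$, is injective. The organizing device is \emph{periodicity}: since $\sO_X(-D)$ is invertible, hence flat, tensoring by it commutes with intersections of subsheaves, so from $E_{t+1}=E_t(-D)$ together with $Fil^j_{n+1}=Fil^j_0(-(n+1)D)=Fil^j_n(-D)$ one gets $Fil^j_{t+1}=Fil^j_t(-D)$ for every $t$; thus every assertion reduces to $0\le t<1$, where $Fil^j_t=E_t\cap Fil^j_0$. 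The arithmetic heart of the lemma is the single identity
\[
Fil^j_t=Fil^j_s\cap E_t\qquad(\text{inside }E_s)\text{ for all }j\text{ and all }t\ge s.
\]
By periodicity it suffices to prove this for $0\le s<1$; then $E_t\subseteq E_s$ gives $Fil^j_s\cap E_t=(E_s\cap Fil^j_0)\cap E_t=Fil^j_0\cap E_t$, and with $n=\lfloor t\rfloor$ one has $Fil^j_t=Fil^j_0(-nD)\cap E_t$ and $E_t\subseteq E_n=E_0(-nD)$, so the claim comes down to $Fil^j_0\cap E_0(-nD)=Fil^j_0(-nD)$. This last equality is where the hypothesis that $Fil_0$ be \emph{saturated} is used: locally a section of the left side is $f^n w$ with $f$ a local equation of $D$ and $w$ a local section of $E_0$ lying in $Fil^j_0$; since $E_0/Fil^j_0$ is torsion free and $f^n$ is a nonzerodivisor, $w\in Fil^j_0$, whence $f^n w\in Fil^j_0(-nD)$. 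Granting the identity, $Fil^j_t\subseteq Fil^j_s$ for $t\ge s$, and combined with $Fil^{j+1}_0\subseteq Fil^j_0$, torsion-freeness, and the weights of $Fil^j_\bullet$ lying among those of $E_\bullet$, one obtains that each $Fil^j_\bullet$ is a parabolic subsheaf and that they form a decreasing filtration with $Fil^0_\bullet=E_\bullet$.

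For Griffiths transversality I check $\nabla_t(Fil^j_t)\subseteq Fil^{j-1}_t\otimes\Omega_{X/k}(\log D)$. Consider first an integer $t=n$. On $E_n=E_0(-nD)$ the $\lambda$-connection is the one induced from the natural $\lambda$-connection on $E_\infty$, so, writing locally $\sO_X(-nD)=f^n\sO_X$ with $f$ a local equation of $D$, the Leibniz rule gives for a local section $s$ of $Fil^j_0$
\[
\nabla(f^n s)=\lambda n\,\tfrac{df}{f}\otimes(f^n s)+f^n\nabla(s),
\]
with $\tfrac{df}{f}$ a local section of $\Omega_{X/k}(\log D)$. As $Fil^j_n=Fil^j_0(-nD)$, $Fil^j_0\subseteq Fil^{j-1}_0$, and $Fil_0$ is Griffiths transverse, both terms lie in $Fil^{j-1}_n\otimes\Omega_{X/k}(\log D)$; off $D$ this is just transversality of $Fil_0$. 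For $n\le t<n+1$, $\nabla_t$ is the restriction of $\nabla_n$, so for a local section $s$ of $Fil^j_t=E_t\cap Fil^j_n$,
\[
\nabla(s)\in\big(E_t\otimes\Omega_{X/k}(\log D)\big)\cap\big(Fil^{j-1}_n\otimes\Omega_{X/k}(\log D)\big)=\big(E_t\cap Fil^{j-1}_n\big)\otimes\Omega_{X/k}(\log D)=Fil^{j-1}_t\otimes\Omega_{X/k}(\log D),
\]
using that tensoring by the locally free sheaf $\Omega_{X/k}(\log D)$ preserves intersections.

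Finally, injectivity of $\phi_{t,s}$. Since $\phi_{t,s}$ is a morphism of graded logarithmic $0$-connections, it is injective iff injective on each graded piece, and the kernel on the $j$-th piece is $\big(Fil^j_t\cap Fil^{j+1}_s\big)/Fil^{j+1}_t$, the intersection taken inside $E_s$. Because $Fil^j_t\subseteq E_t$, the identity of the first paragraph gives $Fil^j_t\cap Fil^{j+1}_s=Fil^j_t\cap\big(E_t\cap Fil^{j+1}_s\big)=Fil^j_t\cap Fil^{j+1}_t=Fil^{j+1}_t$, so the kernel vanishes.

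The step I expect to be the main obstacle is the identity $Fil^j_0\cap E_0(-nD)=Fil^j_0(-nD)$ controlling how the restricted filtration meets the twists $\sO_X(-nD)$: it is the only place the \emph{saturation} hypothesis is genuinely needed, via torsion-freeness of $E_0/Fil^j_0$, and it is what makes the restricted pieces behave well both as parabolic subsheaves and under passage to the associated graded. The remaining ingredients — the parabolic axioms, exactness of tensoring by invertible or locally free sheaves, and the Leibniz rule for $\lambda$-connections — are formal.
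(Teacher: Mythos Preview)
Your proof is correct and uses the same ingredients as the paper --- saturation of $Fil_0$ for the crucial step, the Leibniz rule (equivalently, the isomorphism $(E_{t-1},\nabla)\cong(E_t,\nabla)\otimes(\sO_X(D),\lambda d)$) for transversality at integers, and restriction for non-integer $t$ --- but the organization differs in a way worth noting. The paper handles injectivity of $\phi_{t,s}$ in three stages: first for $n\le s\le t<n+1$ ``by construction'', then $\phi_{n+1,n}$ by saturation, and finally the general case by writing $\phi_{t,s}$ as a composite of such pieces. You instead isolate a single identity $Fil^j_t=Fil^j_s\cap E_t$ valid for all $t\ge s$, prove it once (reducing via periodicity to $Fil^j_0\cap E_0(-nD)=Fil^j_0(-nD)$, which is exactly where saturation enters), and then read off injectivity by a direct kernel computation. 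This buys you a cleaner and more uniform argument that avoids the composition step; the paper's version is terser but leaves more to the reader. One small writing remark: in your saturation step the phrase ``$w$ a local section of $E_0$ lying in $Fil^j_0$'' reads as if $w\in Fil^j_0$ is already assumed, whereas it is what you are about to deduce; it would be clearer to say that $f^n w$ lies in $Fil^j_0$ and then conclude $w\in Fil^j_0$ from torsion-freeness of $E_0/Fil^j_0$.
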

	We call $Fil$ the \emph{induced} filtration from $Fil_0$.
	\begin{proof}
		Let $\lambda d: \sO_{X}(D)\to \sO_X(D)\otimes \Omega_{X/k}(\log D)$ be the parabolic $\lambda$-connection induced by the exterior differential $d$. Then for any $t\in \R$, 
		$$
		(E_{t-1},\nabla)\cong (E_t,\nabla)\otimes (\sO_{X}(D),\lambda d).
		$$
		Thus for $t\in \Z$, the Griffiths transversality of $Fil_t$ follows from that of $Fil_0$. Then, for any $t\in \R-\Z$, $Fil_t$ is also Griffiths transverse by construction. Again by construction, for $n\leq s\leq t<n+1$, $\phi_{t,s}$ is injective. Note that the composite
		$$
		Gr_{Fil_{n+1}}(V_{n+1},\nabla)\stackrel{\phi_{n+1,t}}{\longrightarrow} Gr_{Fil_{t}}(V_{t},\nabla)\stackrel{\phi_{t,n}}{\hookrightarrow} Gr_{Fil_{n}}(V_{n},\nabla)
		$$
		is $\phi_{n+1,n}$. As $Fil_0$ is saturated, it follows that $\phi_{n+1,n}$ is injective. So $\phi_{n+1,t}$ is also injective. Then it follows that $\phi_{t,s}$ is injective for any $t\geq s$, by writing it as a composite of finitely many injective morphisms.  
	\end{proof}
	\begin{defn}
		A filtration $Fil$ over $(E_{\bullet},\nabla)$ is \emph{gr-semistable} if it is the induced filtration from a saturated and Griffiths transverse filtration $Fil_0$ of finite level over $(E_0,\nabla)$ such that the associated graded parabolic 0-connection $Gr_{Fil}(E_{\bullet},\nabla)$ is $\mu_L$-semistable.
	\end{defn}
	For $D=\emptyset$, the above notion reduces back to the one considered in Appendix \cite{LSZ19}. We have a natural generalization of \cite[Theorem A.4]{LSZ19} to the parabolic setting.
	\begin{prop}\label{grsemistable}
		Let $k$ be an algebraically closed filed, and $X$ a smooth projective variety over $k$, equipped with a reduced effective normal crossing divisor $D$. Let $L$ be an ample line bundle over $X$. Let $(E_\bullet,\nabla)$ be a $\mu_L$-semistable parabolic $\lambda$-connection over $(X,D)$. Then there exists a gr-semistable filtration $Fil$ over $(E_\bullet,\nabla)$. 
	\end{prop}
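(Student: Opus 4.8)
The plan is to reduce to the case when $(E_0,\nabla)$ itself is semistable as an $\sO_X$-coherent sheaf with $\lambda$-connection (ignoring the parabolic structure), and then to build $Fil_0$ by an induction on the rank of $E$. If $(E_0,\nabla)$ is already $\mu_L$-semistable without the parabolic weights, we may take $Fil_0$ to be the trivial (length one) filtration and there is nothing to do after checking that the induced filtration $Fil$ — which is then just the parabolic filtration itself — has semistable associated graded; indeed $Gr_{Fil}(E_\bullet,\nabla)=(E_\bullet,\nabla)$ viewed as a parabolic $0$-connection via $\lambda\mapsto 0$, and one checks directly that $\mu_L$-semistability of a parabolic $\lambda$-connection does not see the value of $\lambda$ in the stability inequality, since the defining condition only involves $\nabla$-invariant parabolic subsheaves and their parabolic degrees. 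So the real content is the case when $(E_0,\nabla)$ is \emph{not} semistable as a bare $\lambda$-connection.

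In that case I would run the following construction, mimicking \cite[Theorem A.4]{LSZ19}. Among all $\nabla$-invariant saturated parabolic subsheaves $F_\bullet\subset E_\bullet$ with $\mu_L(F_\bullet)=\mu_L(E_\bullet)$, choose one, call it $F^{(1)}_\bullet$, of maximal rank; such a subsheaf exists by $\mu_L$-semistability (it is nonzero and proper by the assumption that $(E_0,\nabla)$ is not bare-semistable, or rather by the usual maximal-destabilizer argument applied to the subsheaf realizing the slope). By maximality the quotient $(E_\bullet/F^{(1)}_\bullet,\bar\nabla)$ is again $\mu_L$-semistable of the same slope, and its underlying sheaf has strictly smaller rank, so by induction it carries a gr-semistable filtration coming from a saturated Griffiths-transverse $\overline{Fil_0}$ on $E_0/F^{(1)}_0$. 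Pull this filtration back to a filtration of $E_0$ lying between $F^{(1)}_0$ and $E_0$, append $F^{(1)}_0$ at the bottom (and split $F^{(1)}_0$ further using the same induction if its underlying sheaf is not bare-semistable), and check the result is saturated and Griffiths transverse: Griffiths transversality of the new step $F^{(1)}_0\subset E_0$ is exactly $\nabla(E_0)\subset F^{(1)}_0\otimes\Omega_{X/k}(\log D)+\cdots$ which holds because $F^{(1)}_0$ is $\nabla$-invariant of slope $\mu_L(E_0)$ and the induced $0$-connection on the graded piece is genuinely a Higgs-type (i.e. $\sO_X$-linear) operator — here one uses precisely that passing to $Gr$ kills $\lambda\, df$. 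Then let $Fil$ be the induced filtration from $Fil_0$ as in Lemma \ref{induced filtration}; by that lemma it is Griffiths transverse with all $\phi_{t,s}$ injective.

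The point that needs genuine care — and which I expect to be the main obstacle — is verifying that $Gr_{Fil}(E_\bullet,\nabla)$ is $\mu_L$-\emph{semistable} as a parabolic $0$-connection, not merely that each graded piece has the right slope. One has to show that a $Gr$-invariant parabolic subsheaf of $Gr_{Fil}(E_\bullet,\nabla)$ cannot destabilize. The injectivity of the $\phi_{t,s}$ from Lemma \ref{induced filtration} is what makes the parabolic degree of $Gr_{Fil}$ additive over the steps of $Fil_0$ — i.e. $\pardeg_L Gr_{Fil}(E_\bullet,\nabla)=\sum_i\pardeg_L Gr_{Fil}^i$, where $Gr^i$ carries the parabolic structure induced on the $i$-th subquotient — so that each $Gr^i$ has parabolic slope $\mu_L(E_\bullet)$ by construction. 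Then a destabilizing $Gr$-invariant subsheaf would, via the $\nabla$-invariance built into $Fil_0$ and a standard ``see-saw'' argument on the pieces, produce a $\nabla$-invariant parabolic subsheaf of one of the subquotients $(E_\bullet/F^{(i)}_\bullet\text{-type pieces})$ violating its semistability, or lift to contradict $\mu_L$-semistability of $(E_\bullet,\nabla)$ itself. Assembling this see-saw argument cleanly in the parabolic, $\R$-indexed setting — keeping track of which parabolic weights the graded pieces inherit and checking that ``$Gr$ of the tensor with $(\sO_X(D),\lambda d)$'' behaves as expected — is the delicate bookkeeping, but it is formally parallel to the $D=\emptyset$ case of \cite{LSZ19} once Lemma \ref{induced filtration} and Lemma \ref{tensorslope} are in hand.
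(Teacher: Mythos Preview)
Your proposal has a genuine gap at the very first step. With the trivial (one-step) filtration $Fil^0=E_0$, the associated graded $0$-connection is \emph{not} ``$(E_\bullet,\nabla)$ with $\lambda$ set to $0$''; it is $(E_\bullet,0)$ with zero Higgs field. Indeed the graded operator sends $Gr^i\to Gr^{i-1}\otimes\Omega_{X/k}(\log D)$, and for a one-step filtration there is no $Gr^{-1}$. More generally, whenever your filtration is built from $\nabla$-\emph{invariant} subsheaves (as your inductive construction does throughout), Griffiths transversality is satisfied trivially with $\nabla(Fil^i)\subset Fil^i\otimes\Omega$, so the off-diagonal piece---which is precisely the graded Higgs field---vanishes. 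Thus your $Gr_{Fil}(E_\bullet,\nabla)$ is always $(\bigoplus_i Gr^i,\,0)$, and its $\mu_L$-semistability as a parabolic $0$-connection is exactly the semistability of the bare parabolic sheaf $\bigoplus_i Gr^i$. This is \emph{not} implied by the semistability of $(E_\bullet,\nabla)$. For a concrete counterexample (already with $\lambda=0$ and trivial parabolic structure), take $X=\mathbb{P}^1$, $D=\{0,1,\infty\}$ so that $\Omega_X(\log D)\cong\mathcal{O}_X(1)$, and $E=\mathcal{O}(1)\oplus\mathcal{O}$ with $\theta$ the identity $\mathcal{O}(1)\to\mathcal{O}\otimes\mathcal{O}(1)$ in the lower-left block. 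Then $(E,\theta)$ is stable, yet $E$ is unstable as a sheaf; no filtration by $\theta$-invariant subsheaves can produce a semistable graded, while the correct gr-semistable filtration $E\supset\mathcal{O}(1)\supset 0$ uses a step that is \emph{not} $\theta$-invariant.

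The paper's argument is structurally different: it does not attempt to build $Fil_0$ directly but instead starts from an \emph{arbitrary} saturated Griffiths-transverse $Fil_0$ and iteratively improves it. If $Gr_{Fil}(E_\bullet,\nabla)$ is not semistable, one takes its maximal destabilizer $I_{Fil,\bullet}$ (which is automatically graded), and defines a new filtration $\xi(Fil)$ by $\xi(Fil)^i_\bullet=\ker\bigl(E_\bullet\to (E_\bullet/Fil^i_\bullet)/I^{i-1}_{Fil,\bullet}\bigr)$. One checks $\xi(Fil)$ is again induced from a saturated Griffiths-transverse filtration on $(E_0,\nabla)$, and there is a short exact sequence relating $Gr_{\xi(Fil)}$ to $Gr_{Fil}/I_{Fil,\bullet}$ and a shift $I_{Fil,\bullet}^{[1]}$. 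From this, the arguments of \cite[Lemmata A.7--A.8]{LSZ19} (which carry over verbatim to the parabolic setting) show the maximal destabilizing slope is nonincreasing and eventually the process terminates at a gr-semistable filtration. The essential point your approach misses is that the filtration steps must be allowed to be genuinely non-$\nabla$-invariant in order for the graded Higgs field to carry enough information.
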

	\begin{proof}
		We shall adapt the proof of \cite[Theorem A.4]{LSZ19} to the current setting. One starts with an arbitrary filtration $Fil$, induced from some saturated Griffiths transverse filtration $Fil_0$ of level $n$ over $(E_0,\nabla_0)$. Consider the associated graded parabolic 0-connection 
		$$
		(E_{\bullet}=\bigoplus_{i=0}^nE_{\bullet}^i,\theta):=Gr_{Fil}(E_{\bullet},\nabla).
		$$ 
		If it is semistable, we are done. Otherwise, let $I_{Fil,\bullet}$ be its maximal destabilizer-it is \emph{the} maximal $\theta$-invariant parabolic subsheaf of $E_{\bullet}$ with maximal parabolic slope. It is graded, that is, for $0\leq i\leq n$ and $0\leq t<1$, one has
		$$
		I_{Fil,t}=\bigoplus_{i=0}^{n} I^i_{Fil,t}, \quad I^i_{Fil,t}=I_{Fil,t}\cap E^i_{t}.
		$$
		Moreover, $I_{Fil,t}=I_{Fil,0}\cap E_{t}$ for $0\leq t<1$ and hence 
		$$
		I^i_{Fil,t}=I_{Fil,t}\cap E^i_{t}=I_{Fil,0}\cap E_{t}\cap E^i_{t}=I_{Fil,0}\cap E_t\cap E^i_{0}=I^i_{Fil,0}\cap E^i_t. 
		$$
		The key is the construction of a new filtration $\xi(Fil)$ (of level $n+1$), which is defined by  
		$$
		\xi(Fil)_\bullet^i:=\ker(E_\bullet \longrightarrow  \dfrac{E_\bullet/Fil_\bullet^i}{I_{Fil,\bullet}^{i-1}}),\ 1\leq i\leq n+1, \qquad \xi(Fil)_\bullet^0=E_{\bullet}.
		$$
		Claim: $\xi(Fil)$ is the induced filtration from $\xi(Fil)_0$, which is again a saturated Griffiths transverse filtration over $(E_0,\nabla)$. Note that $\xi(Fil)_0$ is nothing but the $\xi(Fil_0)$ in the terminology of \cite[A.6.2]{LSZ19}, and therefore satuated and Griffiths transverse. It remains to show the induced property. For that, we may assume $0\leq t<1$. Note that there is a commutative diagram:
		$$
		\begin{tikzcd}
			0\arrow{r}& Fil_{t}^i \arrow{d}{} \arrow{r}& \xi(Fil)_{t}^i \arrow{r}\arrow{d} &
			I_{Fil,t}^{i-1} \arrow{r}\arrow{d} & 0\\
			0\arrow{r}&Fil^i_0\cap E_t \arrow{r} &
			\xi(Fil)^i_{0} \cap
			E_{t} \arrow{r} &I^i_{Fil,0}\cap E^i_t \arrow{r}
			& 0
		\end{tikzcd}
		$$
		The top and bottom horizontal sequences are exact. The left and right vertical morphism are identity, and the middle vertical morphism is injective. It follows that the middle vertical morphism is also identity. The claim is proved.  
		
		By the claim, there is a short exact sequence of graded parabolic 0-connections:
		\begin{equation}\label{exact graded}
			0 \longrightarrow Gr_{Fil_{\bullet}}(E_\bullet)/I_{Fil,\bullet} \longrightarrow Gr_{\xi(Fil_{\bullet})_{\bullet}}(E_\bullet)\longrightarrow I_{Fil,\bullet}^{[1]}\longrightarrow 0
		\end{equation}
		Granted the above short sequence, the arguments for \cite[Lemmata A.7-A.8]{LSZ19} work verbatim for the parabolic setting. They imply that, there exists some $k_0\in \N$, such that $\xi^{k_0}(Fil)$ is a gr-semistable filtration. 	 
	\end{proof}

	We proceed to the proof of Theorem \ref{restriction thm}.
	\begin{proof} 
		We observe that the proof of \cite[Proposition 3.29]{MO06} is characteristic free, and does not require components of $D$ be smooth. As remarked in \S1, the integrality of a Higgs field has not been used in the proof of \cite[Lemma 3.7]{S92}. Therefore, we have the extension of Mochizuki's restriction theorem to a semistable parabolic 0-connection over $X_{\log}$.

		Now let $(E_\bullet,\nabla)$ be a semistable parabolic $\lambda$-connection with $\lambda\neq 0$. Take a gr-semistable filtration $Fil_{\bullet}$ on $(E_\bullet,\nabla)$ by Proposition \ref{grsemistable}. By \cite[Proposition 1.5]{MR82}, for a general hyperplane section $Y$, one has
		$$
		Gr_{Fil_{\bullet}|_{Y}}(E_{\bullet},\nabla)|_{Y}\cong (Gr_{Fil_{\bullet}}(E_{\bullet},\nabla))|_{Y}.
		$$
		Note that $Gr_{Fil_{\bullet}}(E_{\bullet},\nabla)$ is a parabolic $0$-connection. By Mochizuki's restriction theorem, $(Gr_{Fil_{\bullet}}(E_{\bullet},\nabla))|_{Y}$ is semistable for a general hyperplane section $Y$ of arbitrarily high degree. Because taking grading preserves the parabolic degree, the semistability of $Gr_{Fil_{\bullet}|_{Y}}(E_{\bullet},\nabla)|_{Y}$ implies the semistability of $(E_{\bullet},\nabla)|_{Y}$, as claimed. 
	\end{proof}

	\section{Tensor product theorem}\label{applications}
	In this section, we shall establish Theorem \ref{main thm}. To begin with, we prove three preparatory lemmata. The first lemma deals with the stability with respect to varying parabolic weights. 
	\begin{lem}\label{par wt}
		Notations as Theorem \ref{main thm}. Let $(E_\bullet,\nabla)$ be a  parabolic $\lambda$-connection over $X_{\log}$ with parabolic weights $\vec{a}=(a_{ij})$. 
		\begin{enumerate}
			\item If $(E_\bullet,\nabla)$ is $\mu_L$-stable with respect to $\vec{a}$, then for any $n=1,2,\dots$, there exists a vector $\vec{b}^n=(b^n_{ij})$ in euclidean space with $b^n_{ij}\in \Q\cap [0,1)$ and $0<|\vec{b}^n-\vec{a}|<\frac{1}{n}$ such that $(E_\bullet,\nabla)$ is $\mu_L$-stable with respect to the parabolic weights $\vec{b^n}$. 	Moreover, if $\textrm{char}\ k>0$, then one may choose the denominator of each $\{b^n_{ij}\}$ to be coprime to $\textrm{char}\ k$.
			\item If there exists a sequence of rational vectors  $\vec{b}^n=(b^n_{ij})$ as above such that  $(E_\bullet,\nabla)$ is $\mu_L$-semistable with respect to the parabolic weights $\vec{b}^n$, then $(E_\bullet,\nabla)$ is $\mu_L$-semistable with respect to $\vec{a}$.
		\end{enumerate}
	\end{lem}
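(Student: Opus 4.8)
The plan is to reduce both parts to the fact that, when the underlying parabolic filtration of $E_\bullet$ is held fixed and only the weights are moved, the parabolic degree of any subsheaf (with the induced parabolic structure) is an \emph{affine} function of the weight vector. Since it suffices to test saturated subsheaves with the maximal parabolic structure (the saturation of a $\nabla$-invariant subsheaf is again $\nabla$-invariant, and passing to the saturation does not decrease the parabolic slope), fix a saturated $\nabla$-invariant $F_0\subseteq E_0$ and, for an admissible weight vector $\vec{a}'$, give it the parabolic structure $F^{\vec{a}'}_\bullet$ with $F_t=F_0\cap E_t$; set
\[
g_F(\vec{a}')=\rk(E)\,\pardeg_L(F^{\vec{a}'}_\bullet)-\rk(F)\,\pardeg_L(E^{\vec{a}'}_\bullet).
\]
Along each $D_i$ the flag cut out by $F$ inside $E_\bullet|_{D_i}$ does not change when only the weights change, so the multiplicity $c^s_i(F)\in\{0,1,\dots,\rk E\}$ with which the $s$-th weight along $D_i$ is attached to $F$ is independent of $\vec{a}'$, and $\sum_s c^s_i(F)=\rk F$. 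Hence $\pardeg_L(F^{\vec{a}'}_\bullet)=\deg_L(F_0)+\sum_{i,s}\alpha^s_i\, c^s_i(F)\deg_L D_i$ (writing $\alpha^s_i$ for the coordinates of $\vec{a}'$) and similarly for $E_0$, so $g_F$ is the restriction to weight space of an affine function depending only on the numerical data $\big(\rk F_0,\deg_L F_0,(c^s_i(F))\big)$, and $(E_\bullet,\nabla)$ is $\mu_L$-semistable (resp. stable) for $\vec{a}'$ if and only if $g_F(\vec{a}')\le 0$ (resp. $<0$) for every such $F$.

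Given this, part (2) is immediate by continuity of a single $g_F$. If $(E_\bullet,\nabla)$ were not $\mu_L$-semistable for $\vec{a}$, there would be a saturated $\nabla$-invariant $F_0$ with $g_F(\vec{a})>0$; then $g_F(\vec{b}^n)>0$ for $n\gg 0$, i.e. $F^{\vec{b}^n}_\bullet$ destabilizes $(E_\bullet,\nabla)$ for the weights $\vec{b}^n$, contradicting the hypothesis.

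For part (1) the key point is that only finitely many functions $g_F$ can occur among potential destabilizers, so that stability is an open condition cut out by finitely many affine inequalities. For every saturated $F_0\subseteq E_0$, the Harder--Narasimhan filtration of $E_0$ gives $\deg_L(F_0)\le \rk(F_0)\,\mu_{L,\max}(E_0)$ (see \cite{HL10}); and if $g_F(\vec{a}')\ge 0$ for some admissible $\vec{a}'$, then, since for any admissible weights the parabolic correction of $F$ is strictly smaller than $\rk(F_0)\sum_i\deg_L D_i$ while that of $E_0$ is nonnegative, one also gets a lower bound $\deg_L(F_0)\ge \rk(F_0)\big(\mu_L(E_0)-\sum_i\deg_L D_i\big)$. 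As $\rk F_0\le\rk E$, the multiplicities $c^s_i(F)$ are bounded integers, and $\deg_L F_0$ now ranges over a finite set of integers, there is a \emph{finite} set $\Phi$ of affine functions such that $(E_\bullet,\nabla)$ is $\mu_L$-stable for an admissible $\vec{a}'$ if and only if $g(\vec{a}')<0$ for all $g\in\Phi$. By hypothesis $g(\vec{a})<0$ for all $g\in\Phi$, hence the same holds on a ball $B(\vec{a},\varepsilon)$, so $(E_\bullet,\nabla)$ stays $\mu_L$-stable for every admissible weight vector in $B(\vec{a},\varepsilon)$. It then remains to pick $\vec{b}^n$ inside $B(\vec{a},\varepsilon)\cap B(\vec{a},1/n)$, rational, with all entries in $[0,1)$ and $\vec{b}^n\neq\vec{a}$; when $\chara\ k=p>0$ one additionally invokes that rationals with denominator prime to $p$ are dense in $\R$ (approximate $x$ by $\lfloor qx\rfloor/q$ with $q$ a large integer prime to $p$), so $\vec{b}^n$ can be chosen with all denominators prime to $p$.

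The main obstacle is the finiteness statement in part (1), i.e. the local finiteness of the wall-and-chamber structure on the space of parabolic weights; it is handled above by the Harder--Narasimhan bound on $\deg_L(F_0)$ together with the lower bound coming from a destabilizing inequality, so that no boundedness theorem for families of sheaves is actually required. A subsidiary point to be checked carefully is precisely that $\pardeg_L$ is affine in the weights once the parabolic filtration is fixed (including when some weights collide under perturbation). The operator $\nabla$ intervenes only through the restriction to $\nabla$-invariant test subsheaves, so the argument is formally identical to the case of parabolic sheaves without connection.
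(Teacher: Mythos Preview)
Your argument is correct and follows essentially the same route as the paper: reduce (semi)stability to a finite family of affine inequalities in the weight vector, then invoke openness for part (1) and continuity for part (2). The only cosmetic difference is the source of the upper bound on $\deg_L(F_0)$: you use the Harder--Narasimhan slope $\mu_{L,\max}(E_0)$, whereas the paper extracts the upper bound directly from the stability inequality at $\vec{a}$ itself; both yield the needed finiteness of relevant affine functions.
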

	
	\begin{proof}
		For a vector $\vec{b}=(b_{ij})$ with $b_{ij}\in [0,1)$, $(E_{\bullet},\nabla)$ being $\mu_L$-stable with respect to $\vec{b}$ means that for any $\nabla$-invariant parabolic coherent subsheaf $F_\bullet \subset E_\bullet$, one has the following inequality
		\begin{equation} \label{stable inequality}
			\deg_L(F_0)<\frac{\rk(F_0)}{\rk(E_0)}\deg_L(E_0)+\sum_{i,j} \Big(\frac{\rk(F_0)}{\rk(E_0)}n_{ij}-n_{ij}^{F_\bullet}\Big)\deg_L(D_j) b_{ij}.
		\end{equation}
		Suppose $(E_{\bullet},\nabla)$ to be $\mu_L$-stable with respect to $\vec{a}$. Set 
		$$
		A=\deg_L(E_0)+\sum_{i,j} n_{ij}\deg_L(D_j) a_{ij}.
		$$
		Then the inequality \ref{stable inequality} implies that there is an upper bound $\deg_L(F_0)\leq A$ for all $\nabla$-invariant coherent subsheaves $F_\bullet \subset E_\bullet$. On the other hand, it is clear that there exists a constant $A'$ so that once $\deg_L(F_0)<A'$ is satisfied, the inequality \ref{stable inequality} holds automatically. Hence, the stability condition imposes  only finitely many inequalities for $\vec{b}$. We list them as
		$$
		L_1(\vec{b})> 0, L_2(\vec{b})> 0,\dots, L_m(\vec{b})> 0,
		$$
		where each $L_i$ is linear with \emph{rational} coefficients. In particular, it cuts out an \emph{open} rational cone $C$ in the vector space of $\vec{b}$. Since $C$ contains a solution $\vec{a}$, it must be of positive dimension. Therefore, we may find an approximating sequence $\vec{b}^n$ to $\vec{a}$ with rational entries, as requested in (i). 
		
		Conversely, assume that $(E_\bullet,\nabla)$ is $\mu_L$-semistable with respect to $\vec{b^n}$. Then by the continuity of $L_i$, it follows that $(E_\bullet,\nabla)$ is $\mu_L$-semistable with respect to  $\vec{a}$.

	\end{proof}

	Next we consider the necessary device for mod $p$ reduction. In particular,  we shall assume $\textrm{char}\ k=0$ in the second lemma. We may assume $\lambda\in \{0,1\}$, since $\lambda^{-1}\nabla$ is a connection for $\lambda\neq 0$. For the given tuple $\{X,D,L,E_{\bullet},\nabla\}$ over $k$, there exists an integral (affine) scheme $S$ of finite type over $\Z$, and a new tuple $\{\sX, \sD, \sL, \sE_{\bullet}, \nabla\}$ defined over $S$, where
	\begin{itemize}
		\item [(a)] $\sX$ is smooth projective over $S$,
		\item [(b)] $\sD$ is an $S$-divisor with normal crossing,
		\item [(c)] $\sL$ is an $S$-ample line bundle over $\sX$,
		\item [(d)] $(\sE_{\bullet},\nabla)$ is an $S$-relative parabolic $\lambda$-connection over $(\sX,\sD)$,
	\end{itemize}
	together with a $k$-rational point $s_{\infty}: \Spec\ k\to S$ such that
	$$
	\{\sX, \sD, \sL, \sE_{\bullet}, \nabla\}_{s_{\infty}}:=\{\sX, \sD, \sL, \sE_{\bullet}, \nabla\}\times_{\Spec\ k}S\cong \{X,D,L,E_{\bullet},\nabla\}.
	$$
	Such a tuple is called to be an \emph{arithmetic model} of $\{X,D,L,E_{\bullet},\nabla\}$.
	\begin{lem}\label{mod p red}
		Notations as above. Assume $\dim X=1$.Then $(E_{\bullet},\nabla)$ is $\mu_L$-(semi)stable if and only if there exists infinitely many prime numbers $\{p_i\}$ and geometric points $\{s_i\in S\}$ with $\textrm{char}\ k(s_i)=p_i$ such that
		$(\sE_{\bullet},\nabla)_{s_i}$ is $\mu_{\sL_{s_i}}$-(semi)stable.
	\end{lem}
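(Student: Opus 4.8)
The plan is to spread the data out over $S$ so that $s_\infty$ becomes the \emph{generic} point, and then deduce everything from the openness of (semi)stability in $S$-families of parabolic $\lambda$-connections over curves. Normalise $\lambda\in\{0,1\}$. Since the tuple $\{X,D,L,E_\bullet,\nabla\}$ is defined over the $\Z$-subalgebra $R_0\subset k$ generated by the finitely many coefficients occurring in some presentation of it, and $R_0\hookrightarrow k$ is injective, we may take $S$ to be an open dense subscheme of $\Spec R_0$ --- equivalently, replace any given arithmetic model by the reduced closure of the image of $s_\infty$ and shrink to preserve (a)--(d) --- so that $S$ is integral and of finite type over $\Z$, its generic point $\eta$ has residue characteristic $0$, and $(\sE_\bullet,\nabla)_{s_\infty}\cong(\sE_\bullet,\nabla)_\eta\otimes_{\kappa(\eta)}k$. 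Because $\dim X=1$ and the parabolic weights of $\sE_\bullet$ are constant along $S$, the parabolic rank and parabolic degree of any $S$-flat family of parabolic subsheaves are locally constant on $S$, and there is an integer $N$ such that the parabolic degree of any parabolic subsheaf of any geometric fibre lies in $\tfrac1N\Z$; hence ``the geometric fibre at $s$ is not $\mu_{\sL_s}$-semistable'' means precisely that some $\nabla$-invariant parabolic subsheaf $F_\bullet\subset(\sE_\bullet)_{\bar s}$ satisfies $\rk(E)\pardeg(F_\bullet)-\rk(F)\pardeg(E_\bullet)\ge\tfrac1N$, and similarly for stability.

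Next I would prove that the locus $U\subseteq S$ over which the geometric fibre is $\mu_{\sL}$-semistable is open, and likewise for the stable locus. On a curve any saturated $F_0\subset E_0$ of a fixed rank has $\deg F_0\le\mu_{\max}(E_0)\rk F_0$, and $\mu_{\max}$ is bounded on the finite-type family, so the candidate destabilizing parabolic subsheaves, on which $\nabla$-invariance and the parabolic filtration conditions cut out closed subschemes, are parametrised by a scheme proper over $S$; the numerical inequality above then defines a subset whose image in $S$ is closed. This is Maruyama's openness-of-semistability theorem, whose proof applies verbatim here once one observes, as is immediate, that the limits occurring in it inherit $\nabla$-invariance and the parabolic structure. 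Moreover, since $k$ and every $\kappa(s_i)$ are algebraically closed, (semi)stability is insensitive to the algebraically closed field of definition: a $\nabla$-invariant parabolic subsheaf witnessing the failure of (semi)stability over a larger field spreads out over some variety over the smaller field and specialises, at a rational point, to such a subsheaf over the smaller field. In particular $(\sE_\bullet,\nabla)_{s_i}$ is $\mu_{\sL_{s_i}}$-(semi)stable if and only if the image of $s_i$ in $S$ lies in $U$ (resp.\ in the stable locus).

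Granting this, both directions are formal. If $(\sE_\bullet,\nabla)_{s_i}$ is $\mu_{\sL_{s_i}}$-(semi)stable for even a single $i$, then $U$ is nonempty, hence --- as $S$ is irreducible --- contains $\eta$; so $(\sE_\bullet,\nabla)_\eta$, and therefore its base change $(E_\bullet,\nabla)=(\sE_\bullet,\nabla)_{s_\infty}$ to $k$, is $\mu_L$-(semi)stable. Conversely, if $(E_\bullet,\nabla)$ is $\mu_L$-(semi)stable then $\eta\in U$, so $U$ is a nonempty, hence dense, open subset of $S$; its image in $\Spec\Z$ is a dense constructible set and therefore omits only finitely many primes, so all but finitely many primes $p$ admit a point $u\in U$ with $\chara\,\kappa(u)=p$, and any geometric point $s_i$ above such a $u$ yields a $\mu_{\sL_{s_i}}$-(semi)stable fibre. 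The main obstacle in this scheme is the openness statement itself --- i.e.\ the boundedness of $\nabla$-invariant destabilizing parabolic subsheaves on a curve together with the verification that $\nabla$-invariance and the parabolic conditions define closed subschemes of the relevant Quot scheme; the remainder is soft.
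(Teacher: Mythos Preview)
Your proposal is correct and rests on the same core ingredient as the paper's proof: the closed subscheme of the relative Quot scheme cut out by $\nabla$-invariance (the map $\Phi:\sK\to\sG\otimes\Omega$ is $\sO$-linear even when $\lambda\neq 0$, since the Leibniz term lands in $\sK$ and dies in the quotient). The difference is organisational. You package both directions as consequences of a single openness-of-(semi)stability statement for parabolic $\lambda$-connections over $S$, then read off the conclusion from the irreducibility of $S$ and constructibility in $\Spec\Z$. The paper instead treats the two directions separately: the ``if'' direction via upper-semicontinuity of the Harder--Narasimhan type, and the ``only if'' direction (for general $\lambda$) by an explicit contradiction argument that constructs $\textrm{Quot}^{(d,r),\nabla}$ as a closed subscheme of the Quot scheme, observes it has closed points over infinitely many primes, and invokes Hilbert's Nullstellensatz to produce a destabilising quotient over $k$. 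Your route is more streamlined; the paper's is more explicit about the one nontrivial step (which you correctly identify as the main obstacle but leave as a sketch), namely the construction of the $\nabla$-invariant locus in the Quot scheme and the boundedness of destabilisers.
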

	\begin{proof}
		The if direction follows from the fact that the Harder-Narasimhan function is upper-semicontinuous (\cite[Theorem 3]{S92}, see also \S2\cite{N09}). The basic idea is that the maximal destabilizer of $(E_{\bullet},\nabla)$ spreads over an open subscheme $S'\subset S$ containing $s_{\infty}$. For $\lambda=0$, the only if direction follows from the openness of (semi)stability. In any event, we provide below a proof of the only if direction for an arbitrary $\lambda$. Assume the contrary. We may shrink $S$ such that for any geometric point $s\in S$, $(\sE_{\bullet},\nabla)_{s}$ is unstable. Then there exists a pair of numbers $(d,r)$, an infinite sequence of prime numbers $\{p_i\}_i$ and a sequence of geometric points $\{s_i\}_i\subset S$ with $\textrm{char}\ k(s_i)=p_i$, such that the degree zero term $F^i_0$ of the minimal quotient $(\sE_{\bullet},\nabla)_{s_i}\twoheadrightarrow(F^i_{\bullet}, \nabla_{s_i})$ is of $\deg(F^i_0)=d$ and $\rk (F^i_{0})=r$. Let $\textrm{Quot}^{(d,r)}_{\sE_0/\sX/S}$ be the Quot-scheme parametrizing quotient coherent sheaves of $(\sE_{0})_s$ whose degree is $d$ and rank is $r$ (see e.g. \cite[Theorem 1.5]{M16}). There is a \emph{closed} subscheme $\textrm{Quot}^{(d,r),\nabla}_{\sE_0/\sX/S}$ parametrizing $\nabla$-invariant quotient coherent sheaves.  To see this, let $Q=\textrm{Quot}^{(d,r)}_{\sE_0/\sX/S}$ and $\pi:\sE_{Q}\twoheadrightarrow \sG$ be the universal quotient. Let $\sK=\ker(\pi)$ and $\nabla_{Q}$ be the induced $\lambda$-connection on $\sE_{Q}$. Consider the composite 
		$$
		\Phi: \sK\stackrel{\nabla_{Q}}{\longrightarrow}\sE_{Q}\otimes \Omega_{\sX_{Q}/Q}(\log \sD_{Q})\stackrel{\pi\otimes \textrm{id}}{\longrightarrow} \sG\otimes \Omega_{\sX_{Q}/Q}(\log \sD_{Q}). 
		$$
		Let $\sZ\to  \sX_{Q}$ be the closed subscheme defined by the zero locus of $\Phi$. Its composite with the natural projection $\sX_{Q}\to Q$ gives the projective morphism $\alpha: \sZ\to Q$. Let $\alpha(\sZ)$ be the scheme theoretical image of $\alpha$, which is a closed subscheme of $Q$. Then $\textrm{Quot}^{(d,r),\nabla}_{\sE_0/\sX/S}\to \alpha(\sZ)$ is the closed subscheme of $\alpha(\sZ)$ (hence also a closed subscheme of $Q$) over which $\alpha$ is of relative dimension one.  Because $(\textrm{Quot}^{(d,r),\nabla}_{\sE_0/\sX/S})_{s_i}\neq \emptyset$ for each $i$, it follows from Hilbert's Nullstellensatz that $\textrm{Quot}^{(d,r),\nabla}_{\sE_0/\sX/S}(k)\neq \emptyset$. But then it means that there is a $\nabla$-invariant quotient sheaf $E_0\twoheadrightarrow F_0$ such that $\mu_L(F_{\bullet})<\mu_L(E_{\bullet})$ where $F_{\bullet}$ is the induced parabolic structure on $F_0$. Contradiction. This shows the only if direction for semistability. The only if direction for stability is proved along the same line.
	\end{proof}

	Finally, we shall need to establish a supplementary property of the generalized BIS correspondence (Proposition \ref{BIS}).  
	\begin{lem} \label{commutative} Notations as above. The parabolic pushforward as well as the parabolic pullback commute with direcet sum and tensor product.
	\end{lem}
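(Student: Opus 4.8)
The plan is to establish the assertion for the parabolic pullback $\pi_{\mathrm{par}}^*$ and then transfer it to $\pi_{\mathrm{par}*}$. The transfer is automatic: by Proposition \ref{BIS} the two functors are mutually quasi-inverse equivalences of categories, and if a functor is quasi-inverse to one that is compatible with finite direct sums and with tensor products, then it too is compatible with both, the structural isomorphisms being carried along by the unit and counit of the equivalence. So I would fix attention on $\pi_{\mathrm{par}}^*$. That $\pi_{\mathrm{par}}^*$ commutes with finite direct sums is immediate from its construction (\cite[\S2]{KS20}): a decomposition $E_\bullet=E'_\bullet\oplus E''_\bullet$ splits the underlying $\R$-filtered sheaf, hence is respected by $\pi^*$ and by the divisorial twists defining $\pi_{\mathrm{par}}^*$, and the induced $G$-equivariant $\lambda$-connection splits accordingly.

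For tensor products I would first exhibit a canonical comparison morphism between $\pi_{\mathrm{par}}^*E_\bullet\otimes\pi_{\mathrm{par}}^*F_\bullet$ and $\pi_{\mathrm{par}}^*(E_\bullet\otimes F_\bullet)$ — it comes from the monoidality of $\pi^*$ together with the functoriality of the parabolic construction and the universal property of the image defining the parabolic tensor product — and then reduce checking that it is an isomorphism to a local problem on the curve $X$ (equivalently, since $\pi$ is finite, on $X'$). Here I would use two local structure facts. First, by Lemma \ref{parabolic vb over curve}, near any point of $D$ the parabolic sheaf $E_\bullet$ is a direct sum of parabolic line bundles $(\sO_X)^{a}_\bullet$. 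Second, since $G$ has order prime to $\chara\ k$ and is therefore linearly reductive, an equivariant Nakayama argument, as in the proof of Lemma \ref{parabolic vb over curve}, shows that near any point of $D'$ a $G$-equivariant bundle is $G$-equivariantly a direct sum of rank-one $G$-equivariant pieces. Since the parabolic tensor product is the image construction of \S\ref{preliminaries}, which is local, the compatibility with direct sums already in hand lets me assume $E_\bullet=L^a_\bullet$ and $F_\bullet=M^b_\bullet$ are parabolic line bundles with weights $a=c/N$, $b=c'/N$ at a point $D_i$ of $D$, $c,c'\in\{0,\dots,N-1\}$.

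In this rank-one situation I would work in the local model $x_i=y_i^N$ with $G=\mu_N$ acting by $y_i\mapsto\zeta y_i$. From the superadditivity of the ceiling function — $\lceil u\rceil+\lceil v\rceil\geq\lceil u+v\rceil$, with equality whenever $u$ or $v$ is an integer — one reads off
$$
(L^a_\bullet\otimes M^b_\bullet)_t=(L\otimes M)\bigl(-\lceil t-a-b\rceil D_i\bigr),
$$
so that $L^a_\bullet\otimes M^b_\bullet$ is the parabolic line bundle of weight $(a+b)\bmod 1$ whose underlying bundle is $L\otimes M$ when $c+c'<N$ and $(L\otimes M)(D_i)$ when $c+c'\geq N$. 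Recalling (\cite[\S2]{KS20}) that $\pi_{\mathrm{par}}^*$ carries a parabolic line bundle of weight $c/N$ along $D_i$ to the pullback of its underlying bundle twisted by $\sO_{X'}(cD_i')$, with the $G$-linearization of character $\chi^c$ for a fixed generator $\chi$ of the character group of $G$, a short computation using $\pi^*\sO_X(D_i)=\sO_{X'}(ND_i')$ shows that both $\pi_{\mathrm{par}}^*(L^a_\bullet\otimes M^b_\bullet)$ and $\pi_{\mathrm{par}}^*L^a_\bullet\otimes\pi_{\mathrm{par}}^*M^b_\bullet$ coincide with $\pi^*(L\otimes M)\otimes\sO_{X'}\bigl((c+c')D_i'\bigr)$ carrying the $G$-linearization of character $\chi^{c+c'}$, and that the comparison morphism is precisely this identification. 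Finally, the $\lambda$-connections on the two sides agree over $U=X\setminus D$, where $\pi$ is an étale $G$-torsor and $\pi_{\mathrm{par}}^*$ is ordinary Galois descent — a symmetric monoidal equivalence compatible with connections; since a $\lambda$-connection on a bundle over a smooth curve is determined by its restriction to any dense open, the comparison isomorphism is automatically horizontal, which completes the plan. I expect the one genuinely delicate point to be this reconciliation, along the ramification divisor, of the image-construction of the parabolic tensor product with the character bookkeeping on the cover: it is exactly the wrap-around in $\lceil t-a-b\rceil$ — the jump from $L\otimes M$ to $(L\otimes M)(D_i)$ once $c+c'\geq N$ — that has to be matched against $\chi^{c+c'}=\chi^{(c+c')\bmod N}$ by means of $\pi^*\sO_X(D_i)=\sO_{X'}(ND_i')$, and for this the sign conventions entering the two functors must be kept consistent throughout.
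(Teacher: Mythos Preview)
Your proposal is correct and follows essentially the same strategy as the paper: reduce to one of the two functors via the equivalence of Proposition~\ref{BIS}, localize, split into rank-one summands using Lemma~\ref{parabolic vb over curve}, and then verify the tensor compatibility by an explicit computation in the local model $y^N=x$.

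The one noteworthy difference is the choice of which functor to compute with directly. The paper works with the parabolic \emph{pushforward} $\pi_{\mathrm{par}*}$: it writes out the filtration $(\pi_*(V\otimes\sO_{X'}(\sum_j[-tN]D_j')))^G$ explicitly for $G$-equivariant line bundles with characters $\xi^{-n_1},\xi^{-n_2}$, and checks the case split $n_1+n_2<N$ versus $n_1+n_2\geq N$. You instead work with the parabolic \emph{pullback} $\pi_{\mathrm{par}}^*$. Your route has the mild advantage that ordinary $\pi^*$ is already monoidal, so the comparison morphism is easier to name and the horizontality check over $U$ via Galois descent is clean; the paper's route has the advantage that $\pi_{\mathrm{par}*}$ is given by a closed formula, so the local verification is entirely mechanical. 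The wrap-around bookkeeping you correctly flag---the jump to $(L\otimes M)(D_i)$ when $c+c'\geq N$, matched against $\chi^{c+c'}$ via $\pi^*\sO_X(D_i)=\sO_{X'}(ND_i')$---is exactly the dual of the paper's case split on the pushforward side.
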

	
	\begin{proof}
		It suffices to show the parabolic pushforward commutes with direct sum and tensor product by the equivalence of categories. Let $\pi: X'\to X$ be as above. Recall that for a $G$-equivariant sheaf $V$ over $X'$, the parabolic structure on $\pi_{\mathrm{par}*}V$ is given by 
		$$
		(\pi_{\mathrm{par}*}V)_t:=(\pi_*(V\otimes \sO_{X'}(\sum_j[-t N]D_j'))^{G}.
		$$
		Since direct sum commutes with tensor product, pushforward $\pi_*$ and taking $G$-invariants, it follows that parabolic pushforward commutes with direct sum. Let $(V_i,\nabla_i),i=1,2$ be two $G$-equivariant $\lambda$-connections over $X'_{\log}$. One notices that there is a natural morphism of parabolic $\lambda$-connections
		$$
		\varphi: \pi_{\mathrm{par}*}(V_1,\nabla_1)\otimes \pi_{\mathrm{par}*}(V_2,\nabla_2)\to \pi_{\mathrm{par}*}((V_1,\nabla_1)\otimes (V_2,\nabla_2)).
		$$
		We are going to show that $\varphi$ is an isomorphism. The problem is local. As $\dim X=1$, by Lemma \ref{parabolic vb over curve} it follows that, we may assume that $V_i, i=1,2$ are direct sums of $G$-equivariant line bundles (via the BIS correspondence, direct sums of parabolic line bundles over $X_{\log}$ and direct sums of $G$-equivariant line bundles over $X'$ correspond to each other.). We may assume further that $V_i$s are simply $G$-equivariant line bundles. 
		
		Etale locally we may assume that $$X'=\{y^N=x\}\subset \A^2_k\stackrel{pr}{\to} X=\{y=0\}\cong \A^1,$$ and $V_i=\mathcal{O}_{X'}e_i$ is the trivial line bundles with base element $e_i$. Take a generator $\sigma\in G$.  Then the $G$-action on $\sO_{X'}$ and $V_i$ are described respectively by 
		$$
		\sigma(y)=\xi y, \quad \sigma(e_i)=\xi^{-n_i} e_i,
		$$ 
		where $\xi$ is an $N$-th primitive root of unity, and $0\leq n_i/N<1$ is the parabolic weight of $\pi_{\mathrm{par}*} V_i$. Then the paraboloic structure of $\pi_{\mathrm{par}*} V_i$ is determined by
		\begin{equation*} 
			(\pi_{\mathrm{par}*} V_i)_t=\left \{
			\begin{array}{lcl}
				\mathcal O_Xy^{n_i}e_i & & {0\leq t\leq \frac{n_i}{N}}\\
				\mathcal O_Xxy^{n_i}e_i & & { \frac{n_i}{N}<t<1}.
			\end{array} \right.
		\end{equation*}
		It follows that the paraboloic structure of $\pi_{\mathrm{par}*}V_1\otimes\pi_{\mathrm{par}*}V_2$ is determined by
		\begin{equation*} 
			(\pi_{\mathrm{par}*}V_1\otimes\pi_{\mathrm{par}*}V_2)_t=\left \{
			\begin{array}{lcl}
				\mathcal O_Xy^me_1\otimes e_2 & & {0\leq t\leq \frac{m}{N}}\\
				\mathcal O_Xxy^me_1\otimes e_2 & & { \frac{m}{N}<t<1}.
			\end{array} \right.
		\end{equation*}
		Here  $m=n_1+n_2$ if $n_1+n_2 < N$, and $m=n_1+n_2-N$ if $n_1+n_2 \geq N$. On the other hand, $V_1\otimes V_2$ is the trivial bundle with base element $e_1\otimes e_2$. It is straightforward to verify that the last expression is nothing but the parabolic structure of $\pi_{\mathrm{par}*}(V_1\otimes V_2)$ for $0\leq t<1$. Therefore $\varphi$ is indeed an isomorphism. The lemma is proved.
	\end{proof}

	Now we proceed with the proof of Theorem \ref{main thm}.
	\begin{proof}
		By Theorem \ref{restriction thm}, we may assume $\dim X=1$ in the following. We divide the whole proof into five steps. In the first four steps, we shall assume $(E^i_{\bullet},\nabla^i), i=1,2$ to be $\mu_L$-stable.
		
		{\itshape Step 1.} Let $X$ be a smooth projective curve defined over $k$, and $D\subset X$ a reduced effective divisor. By Lemma \ref{par wt}, we may assume the parabolic weights of $E^i_{\bullet}, i=1,2$ are contained in $\frac{1}{N}\Z$ and $(N,p)=1$ when $\textrm{char}\ k=p>0$. Fix a closed point $x_0\in X$ away from $D$. We may choose another reduced effective divisor $B$ such that the degree of $\tilde D=D+B$ is divisible by $N$. Let $\pi: X'\to X$ be the cyclic cover with branch divisor $\tilde D$. We claim that the parabolic pullback $\pi_{\mathrm{par}}^*(E^i_{\bullet},\nabla^i)$, which is a $G$-equivariant logarithmic $\lambda$-connection over $(X',\pi^{-1}\tilde D)$, is semistable. Indeed, if it were unstable, then the maximal destabilizer would be $G$-equivariant and invariant under $\pi_{\mathrm{par}}^*\nabla^i$. By the generalized BIS correspondence, it descends to a parabolic subsheaf of $E^i_{\bullet}$ which is $\nabla^i$-invariant. The claim follows. Moreover by Lemma \ref{commutative}, we have 
		$$
		\pi_{\mathrm{par}}^*((E^1_{\bullet},\nabla^1)\otimes (E^2_{\bullet},\nabla^2))\cong \pi_{\mathrm{par}}^*(E^1_{\bullet},\nabla^1)\otimes \pi_{\mathrm{par}}^*(E^2_{\bullet},\nabla^2).
		$$
		Therefore, we may assume the parabolic structure of $E^i_{\bullet}$ to be trivial (Example \ref{example} (a)).\\
		
		{\itshape Step 2.} When $\textrm{char}\ k=0$, we take an arithmetic model of $\{X,D,L,E^1_{\bullet}\oplus E^2_{\bullet},\nabla^1\oplus \nabla^2\}$. By Lemma \ref{mod p red}, there exists an infinite sequence of geometric points $\{s_i\}\subset S$ such that both $(\sE^1_{\bullet},\nabla)_{s_i}$ and $(\sE^2_{\bullet},\nabla)_{s_i}$ are stable. We remove those (at most finitely many) $s_i$s satisfying $p_i<\rk(E^1)+\rk(E^2)-1$ from the sequence. By Lemma \ref{mod p red} again, the char zero statement follows from the char $p$ statement.\\

		{\itshape Step 3.} We choose an arbitrary $W_2(k)$-lifting of the pair $(X,D)$, and let $C^{-1}$ be inverse Cartier transform of Ogus-Vologodsky with respect to this lifting (see Appendix \cite{LSYZ19}). Let $(E^i,\nabla^i), i=1,2$ be two stable logarithmic $\lambda$-connections over $(X,D)$. Then by Proposition \ref{grsemistable}, there exists a filtration $Fil^i_{-1}$ over $E^i$ such that $\Gr_{Fil^i_{-1}}(E^i,\nabla^i)$ is a graded semistable logarithmic Higgs bundle over $(X,D)$. Because of the rank assumption, one has $\rk \Gr_{Fil^i_{-1}}(E^i,\nabla^i)\leq p$. Therefore, there exists a semistable Higgs-de Rham flow initializing $\Gr_{Fil^i_{-1}}(E^i,\nabla^i)$ for $i=1,2$ (\cite[Theorem A.1]{LSZ19}, \cite[Theorem 5.12]{LA13}). That is, we obtain a flow of the following form:
		$$
		\xymatrix@C=4.5ex{
			(E^{i},\nabla^i)\ar[dr]^{Gr_{Fil^i_{-1}}} && (H^i_0,\nabla^i_0)\ar[dr]^{Gr_{Fil^i_0}} &&  (H^i_1,\nabla^i_1)\ar[dr]^{Gr_{Fil^i_1}} \\
			&(E^i_0,\theta^i_0) \ar[ur]^{C^{-1}} & & (E^i_1,\theta^i_1) \ar[ur]^{C^{-1}}&&
			\cdots,
		}
		$$
		in which each Higgs term in bottom is semistable. Since a Higgs-de Rham flow preserves the rank, the rank assumption implies that $\rk E^1_j+\rk E^2_j\leq p+1$ for each $j\geq 0$. So the nilpotent indices $l^i_j$s of $\theta^i_j$s satisfy $l^1_j+l^2_j\leq p-1$. By Lemma 3.4 \cite{KS20}, it follows that
		$$
		C^{-1}((E^1_j,\theta^1_j)\otimes (E^2_j,\theta^2_j))\cong (H^1_j,\nabla^2_j)\otimes (H^2_j,\nabla^2_j).
		$$
		Thus, we obtain the tensor product of flows:
		$$
		\xymatrix@C=2.5ex@R=8ex{
			(E^{1},\nabla^1)\otimes (E^2,\nabla^2)\ar[dr]^-{Gr_{Fil^1_{-1}\otimes Fil^2_{-1}}} && (H^1_0,\nabla^2_0)\otimes (H^2_0,\nabla^2_0)\ar[dr]^-{Gr_{Fil^1_{0}\otimes Fil^2_{0}}} & \\
			&(E^1_0,\theta^1_0)\otimes (E^2_0,\theta^2_0) \ar[ur]^{C^{-1}} & &
			\cdots.
		}
		$$
		In the next step, we are going to show the above flow is semistable, that is, each Higgs term in bottom is semistable. In particular, $(E^1_0,\theta^1_0)\otimes (E^2_0,\theta^2_0)$ is semistable. It follows that $(E^{1},\nabla^1)\otimes (E^2,\nabla^2)$ is semistable, as desired.\\

		{\itshape Step 4.} 
		Set $r_i=\rk(E^i)$ and $d_i=\deg(E^i)$. In case $r_i|d_i$, one may twist $(E^i_0,\theta^i_0)$ with some invertible sheaf of degree $-d_i/r_i$. The resulting logarithmic Higgs bundle is still semistable and of degree zero. As $r_1+r_2\leq p+1$, we may assume that $p\geq 3$ and $r_i\leq p-1$ for each $i$. After taking the base change via a certain cyclic cover of degree $r_i$, one may reduce the problem to the situation that $r_i|d_i$. Therefore, we may assume $\deg E^i_0=0$ and consequently $\deg E^i_j=0$ for $i=1,2$ and $j\geq 0$. Since the ranks and the degrees of $\{E_j^i\}_j$ are constant for $i=1,2$, they form \emph{bounded} families. It follows that $\{E^1_j\otimes E^2_j\}_j$ is also a bounded family. In particular, the degrees of subsheaves of $\{E^1_j\otimes E^2_j\}_j$ have a upper bound by a theorem of Grothendieck (see e.g. \cite[Proposition 3.12]{M16}). But then, if there were a $\theta$-invariant subsheaf $F\subset E^1_0\otimes E^2_0$ of positive degree, then by the flow it would give rise to a $\theta$-invariant subsheaf of $E^1_j\otimes E^2_j$, whose degree exceeds the upper bound when $j$ becomes large enough. Contradiction.\\

		{\itshape Step 5.} The final step reduces the general case to the previous one by using a Jordan-H\"older (JH) filtration attached to a semistable parabolic $\lambda$-connection. Let 
		$$0=(E_{\bullet}^{i,0},\nabla^i)\subset (E_{\bullet}^{i,1},\nabla^i)\subset\dots\subset (E_{\bullet}^{i,m_i},\nabla^i)=(E_{\bullet}^i,\nabla^i),$$
		be a JH filtration of $(E_{\bullet}^i,\nabla^i), i=1,2$, that is, each subquotient $$\Gr_{E_{\bullet}^i}^j:=(E_{\bullet}^{i,j}/E_{\bullet}^{i,j-1},\nabla^i),\quad i=1,2, \quad 1\leq j\leq m_i$$
		is $\mu_L$-stable of slope $\mu_L(E_{\bullet}^i)$. Then we define a filtration of parabolic $\lambda$-connections over the tensor product as follows: 
		$$0=(F^0_\bullet,\nabla)\subset (F^1_\bullet,\nabla)\subset \dots \subset (F^{m_1m_2}_\bullet,\nabla)=(E^{1}_\bullet,\nabla^1)\otimes (E^2_\bullet,\nabla^2),$$
		where for $0\leq i\leq m_1-1$ and $1\leq j\leq m_2$
		$$
		F^{im_2+j}_{\bullet}=E^{1,i}_{\bullet}\otimes E^2_{\bullet}+E^{1,i+1}_{\bullet}\otimes E^{2,j}_{\bullet}.
		$$
		
		One verifies that 
		$$
		(F^j_\bullet/F^{j-1}_\bullet,\nabla)\cong \Gr_{E_{\bullet}^1}^{\left\lceil \frac{j}{m_2} \right\rceil}\otimes  \Gr_{E_{\bullet}^2}^{j-\big(\left\lceil \frac{j}{m_2} \right\rceil-1\big)m_2}.
		$$
		By the previous discussion, $(F^j_\bullet/F^{j-1}_\bullet,\nabla)$ is $\mu_L$-semistable of slope
		\begin{equation*}
			\begin{split}
				\mu_L\bigg(\Gr_{E_{\bullet}^1}^{\left\lceil \frac{j}{m_2} \right\rceil}\otimes \Gr_{E_{\bullet}^2}^{j-\big(\left\lceil \frac{j}{m_2} \right\rceil-1\big)m_2} \bigg)&=\mu_L\bigg(\Gr_{E_{\bullet}^1}^{\left\lceil \frac{j}{m_2} \right\rceil}\bigg)+\mu_L\bigg(\Gr_{E_{\bullet}^2}^{j-\big(\left\lceil \frac{j}{m_2} \right\rceil-1\big)m_2}\bigg)\\
				&=\mu_L(E^1_\bullet)+\mu_L(E^2_\bullet),
			\end{split}
		\end{equation*}
		where the first equality follows from Lemma \ref{tensorslope}. It follows that $ (E^{1}_\bullet,\nabla^1)\otimes (E^2_\bullet,\nabla^2)$ is $\mu_L$-semistable.
	\end{proof}

	\appendix

	{\bf Acknowledgements.} We are grateful to H\'{e}l\`{e}ne Esnault for many helpful comments and valuable suggestions. Especially, she pointed out to us an error in our early argument about the Mehta-Ramanathan type theorem for parabolic $\lambda$-connections (see footnote 2) as well as an error in Lemma \ref{induced filtration}.

	\bibliographystyle{plain}

\end{document}